\date{September 11, 2010}
\newcommand\R{\mathbb{R}}
\newcommand{\la}{\langle}
\newcommand{\ra}{\rangle}
\newcommand{\bd}{\partial}
\newcommand{\nn}{\textbf{n}}
\newcommand{\bfr}{\mathbf{r}}
\newtheorem{teo}{Theorem}[section]
\newtheorem{lem}[teo]{Lemma}
\newtheorem{prop}[teo]{Proposition}
\newtheorem{cor}[teo]{Corollary}
\newtheorem{rem}[teo]{Remark}
\theoremstyle{definition}
\newtheorem{defi}[teo]{Definition}
\author[M. Calvo]{Mar\'{\i}a Calvo}
\address{Departamento de \'Algebra, Facultad de Ciencias,
Universidad de Granada, 18071 Granada, Spain}
\email{mariacc88@gmail.com}
\author[V. Mu\~{n}oz]{Vicente Mu\~{n}oz}
\address{Facultad de
Matem\'aticas, Universidad Complutense de Madrid, Plaza de Ciencias
3, 28040 Madrid, Spain}
\email{vicente.munoz@mat.ucm.es}
\subjclass[2000]{Primary: 52A99. Secondary: 97G40.}
\keywords{Convex domain, polygons, triangle, notable points}
\thanks{First author partially supported through Spanish MEC grant MTM2007-63582}
\title[The most inaccessible point]{The most inaccessible point of a convex domain} % in $\R^n$}
\begin{document}

\maketitle

\begin{abstract}
The inaccessibility of a point $p$ in a bounded domain $D\subset \R^n$
is the minimum of the lengths of segments through $p$ with boundary at $\bd D$. The
points of maximum inaccessibility $I_D$ are those
where the inaccessibility achieves its maximum.
We prove that for strictly convex domains, $I_D$ is either a point or a segment,
and that for a planar polygon $I_D$ is in general a point.
We study the case of a triangle, showing that this point is not any
of the classical notable points.
\end{abstract}

%%%%%%%%%%%%%%%%%%%%%%%%%%%%%%%%%%%%%%%%%%%%%%%%%%%%%%%%%%%%%%%%%%
\section{Introduction} \label{sec:intro}
%%%%%%%%%%%%%%%%%%%%%%%%%%%%%%%%%%%%%%%%%%%%%%%%%%%%%%%%%%%%%%%%%%

The story of this paper starts when the second author was staring at some workers
spreading cement over the floor of a square place to construct a
new floor over the existing one. The procedure was the following: first they divided the
area into triangular areas (actually quite irregular triangles, of around 50 square meters of
area). They put bricks all along the sides of the triangles and then poured the
liquid cement in the interior. To make the floor flat, they took a big rod of metal,
and putting it over the bricks on two of the sides, they moved the rod to flatten the
cement. Of course, they had to be careful as they were reaching the most inner part
of the triangle.

The question that arose in this situation is: what is the minimum size for the rod?
Even more, which is the most inaccessible point, i.e. the one that requires the full
length of the rod? Is it a notable point of the triangle?

\medskip

The purpose of this paper is to introduce the concept of maximum inaccessibility for a
domain. This is done in full generality for a bounded domain in $\R^n$.
%(that is, with boundary of class piecewise-$C^1$).
The inaccessibility function $\bfr$ assigns to a point of the domain $D$ the minimum
length of a segment through it with boundary in $\bd D$. We introduce the sets $D_r=\{x \, | \, \bfr(x)>r\}$
and the most inaccessible set $I_D$ given by the points where the
inaccessibility function achieves its maximum value (the notion has to be suitable modified
for the case where $\bfr$ only has suppremum).

Then we restrict to convex domains to prove convexity properties of the sets $D_r$
and $I_D$. For strictly convex domains, $I_D$ is either
a point or a segment. For planar convex domains not containing pairs of regular points with parallel
tangent lines (e.g. polygons without parallel sides), $I_D$ is
a point. In some sense, domains for which $I_D$ is not a point are of very special nature.
When $I_D=\{p_D\}$ is a point, we call $p_D$ \emph{the point of maximum inaccessibility} of $D$.

%We use the Gromov-Hausdorff metric in the set of convex domains to prove continuity for
%the point of maximum inaccessibility, and a density argument to define the point of maximum
%inaccessibility $p_D$ for any domain. Always $p_D\in I_D$.

In the final section, we shall study in detail the case of a polygonal domain in the plane,
and more specifically the case of a triangle, going back to the original problem.
One of the results is that the point $p_T$, for a triangle $T$, is not a notable point of $T$.
It would be nice to determine explicitly this point in terms of the coordinates of the vertices.
We do it in the case of an isosceles triangle.

\noindent \textbf{Acknowledgements}
We are very grateful to Francisco Presas for many discussions which helped to shape and
improve the paper.
The first author thanks Consejo Superior de Investigaciones
Cient\'{\i}ficas for its hospitality and for funding (through the JAE-Intro program)
her stay in July and September 2009 at the Instituto de Ciencias Matem\'aticas CSIC-UAM-UC3M-UCM,
where part of this work was done. Special thanks to Francisco Presas for supervision during her visit.

%%%%%%%%%%%%%%%%%%%%%%%%%%%%%%%%%%%%%%%%%%%%%%%%%%%%%%%%%%%%%%%%%%%%%%%%%%%%
\section{Accessibility for domains} \label{sec:access}
%%%%%%%%%%%%%%%%%%%%%%%%%%%%%%%%%%%%%%%%%%%%%%%%%%%%%%%%%%%%%%%%%%%%%%%%%%%%

Let $D\subset \R^n$ be a bounded domain, that is an open subset such that $\overline{D}$ is compact.
Clearly also $\bd D$ is compact. For a point $p\in D$, we consider the function:
  $$
   f_p: S^{n-1} \to \R_+\, ,
  $$
which assigns to every unit vector $v$ the length $l(\gamma)$
of the segment $\gamma$ given as the connected
component of $(p+\R v) \cap D$ containing $p$.

\begin{lem}\label{lem:semi-cont-1}
  $f_p$ is lower-semicontinuous, hence it achieves its minimum.
\end{lem}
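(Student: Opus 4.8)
The plan is to prove lower-semicontinuity directly from the definition and then invoke the standard fact that a lower-semicontinuous function on a compact space attains its minimum (here the parameter space $S^{n-1}$ is compact). Since $\overline{D}$ is compact, every chord has finite length, so $f_p$ takes values in $\R_+$. Fixing $v\in S^{n-1}$, the relevant connected component is an open interval $\{p+tv : t\in(a,b)\}$ whose endpoints are finite because $D$ is bounded and which satisfies $a<0<b$ because $D$ is open; thus $f_p(v)=b-a$. It therefore suffices to show that for every sequence $v_k\to v$ in $S^{n-1}$ one has $\liminf_k f_p(v_k)\ge f_p(v)$.

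To this end, I would fix a small $\e>0$ and consider the closed subsegment $K_\e=\{p+tv : t\in[a+\e,b-\e]\}$. Since $K_\e$ is a compact subset of the open set $D$, there is $\delta>0$ such that the $\delta$-neighbourhood of $K_\e$ is contained in $D$. For each $t\in[a+\e,b-\e]$ the point $p+tv_k$ satisfies $|(p+tv_k)-(p+tv)|=|t|\,|v_k-v|\le C\,|v_k-v|$ with $C=\max(|a+\e|,|b-\e|)$, so once $|v_k-v|<\delta/C$ the whole segment $\{p+tv_k : t\in[a+\e,b-\e]\}$ lies in the $\delta$-neighbourhood of $K_\e$, hence in $D$. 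This segment is connected, lies on the line $p+\R v_k$, and contains $p$ (as $a+\e<0<b-\e$), so it is contained in the connected component of $(p+\R v_k)\cap D$ through $p$; therefore $f_p(v_k)\ge (b-\e)-(a+\e)=f_p(v)-2\e$.

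Letting $k\to\infty$ gives $\liminf_k f_p(v_k)\ge f_p(v)-2\e$, and since $\e>0$ is arbitrary this yields $\liminf_k f_p(v_k)\ge f_p(v)$, i.e.\ lower-semicontinuity; compactness of $S^{n-1}$ then forces the minimum to be attained. I expect the only delicate point to be the passage from a single nearby line to the connected component through $p$: the argument sidesteps any control of the full component for $v_k$ by exhibiting an explicit long connected sub-chord through $p$, which already bounds $f_p(v_k)$ from below. This asymmetry is essential, since for non-convex $D$ the upper bound can genuinely jump (a nearby direction may escape into a much longer component), so it matters that only the lower estimate is needed here.
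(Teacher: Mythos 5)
Your proof is correct, but it takes a genuinely different route from the paper's. The paper argues ``from the outside'': it writes $\overline{\gamma_{p_n,v_n}}=[P_n,Q_n]$ with $P_n,Q_n\in\bd D$, uses compactness of $\bd D$ to extract convergent subsequences $P_n\to P$, $Q_n\to Q$, and observes that the limit segment $[P,Q]$ lies on the line $p+\R v$ and contains the component $\gamma_{p,v}$, whence $\lim \|P_n-Q_n\|\geq l(\gamma_{p,v})$. You argue ``from the inside'': you exhaust the open chord by the compact subsegment $K_\e$, use that a compact subset of the open set $D$ has a $\delta$-neighbourhood inside $D$, and exhibit an explicit connected subchord through $p$ on each nearby line, which bounds $f_p(v_k)$ from below by $f_p(v)-2\e$. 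Both arguments are sound, and both correctly exploit the one-sided nature of the estimate. Two remarks on what each buys. First, the paper actually proves the stronger statement that $H(p,v)=f_p(v)$ is \emph{jointly} lower-semicontinuous on $D\times S^{n-1}$, which is then reused in the proof of Proposition \ref{prop:semi-cont-2}; your argument as written fixes $p$, though it upgrades to joint lower-semicontinuity with no extra work (replace $p+tv_k$ by $p_k+tv_k$ and enlarge $C$ slightly), so this is a difference of scope, not of substance. Second, your approach avoids subsequence extraction entirely and gives an explicit modulus ($|v_k-v|<\delta/C$ suffices), making it somewhat more elementary and quantitative; the paper's approach is shorter because it delegates the work to compactness of $\bd D$. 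One pedantic point you may wish to note: your constant $C=\max(|a+\e|,|b-\e|)$ is positive precisely because $a+\e<0<b-\e$, which holds for $\e$ small since $a<0<b$; this is worth saying since you divide by $C$.
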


\begin{proof}
Let us introduce some notation: for $p\in D$ and $v\in S^{n-1}$, we denote
$\gamma_{p,v}$ the connected
component of $(p+\R v) \cap D$ containing $p$. (So $\overline{\gamma_{p,v}}=[P,Q]$
for some $P,Q\in \bd D$.)
Now define the function
 $$
 H: D\times S^{n-1} \to \R_+\, ,
 $$
by $H(p,v)=f_p(v)$. Let us see that $H$ is lower-semicontinuous.
Suppose that $(p_n,v_n) \to (p,v)$. Let $\overline{\gamma_{p_n,v_n}}=[P_n,Q_n]$,
where $P_n,Q_n\in \bd D$. As $\bd D$ is compact, then there are convergent
subsequences (which we denote as the original sequence), $P_n\to P$, $Q_n\to Q$.
Clearly $P,Q\in \bd D$. Let $\gamma$ be the open segment with $\overline\gamma=[P,Q]$.
Then $p\in \gamma \subset (p+\R v)$. So $\gamma_{p,v}\subset \gamma$ and
  $$
  H(p_n,v_n)=l(\gamma_{p_n,v_n})=|| P_n-Q_n||  \to || P-Q|| = l(\gamma) \geq l(\gamma_{p,v})=H(p,v)\, .
  $$
Clearly, $f_p(v)=H(p,v)$, obtained by freezing $p$, is also lower-semicontinuous.
\end{proof}

\begin{rem}
 % We may extend $H$ to $\bar H: \overline D\times S^{n-1} \to \R_{\geq 0}$
 In Lemma \ref{lem:semi-cont-1}, if $D$ is moreover convex, then $H$ is continuous. This follows
 from the observation that a closed segment $\sigma=[P,Q]$
 with endpoints $P,Q\in \bd D$ either is fully contained in $\bd D$
 or $\sigma \cap \bd D =\{P,Q\}$. The segment $\gamma$ in the proof of Lemma \ref{lem:semi-cont-1}
 has endpoints in $\bd D$ and goes through $p$, therefore it coincides with $\gamma_{p,v}$.
 So $H(p_n,v_n)\to H(p,v)$, proving the continuity of $H$.
\end{rem}

We say that a point $p\in D$ is \emph{$r$-accessible} if there is a segment of length at most $r$
with boundary at $\bd D$ and containing $p$. Equivalently, let
  $$
  \bfr (p)=\min_{v\in S^{n-1}} f_p(v)\, ,
  $$
which is called \emph{accessibility} of $p$. Then $p$ is $r$-accessible if $\bfr(p)\leq r$.
Extend $\bfr$ to $\overline{D}$ by setting $\bfr(p)=0$ for $p\in \bd D$.

\begin{prop}\label{prop:semi-cont-2}
 The function $\bfr:\overline{D}\to \R_{\geq 0}$ is lower-semicontinuous.
\end{prop}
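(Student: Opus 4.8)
The plan is to deduce the result from Lemma \ref{lem:semi-cont-1}, which provides the joint lower-semicontinuity of $H(p,v)=f_p(v)$ on $D\times S^{n-1}$, combined with the compactness of the direction sphere. I would first dispose of the boundary points: for $p\in\bd D$ we have $\bfr(p)=0$, and since $\bfr\ge 0$ everywhere on $\overline{D}$, the required inequality $\liminf_{x\to p}\bfr(x)\ge 0=\bfr(p)$ holds trivially. Moreover, at an interior point $p\in D$ any approaching sequence $x_n\to p$ eventually lies in $D$ (because $D$ is open), and finitely many initial terms do not affect a $\liminf$, so it suffices to verify lower-semicontinuity at interior points using sequences contained in $D$.

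For the interior case I would argue sequentially. Fix $p\in D$ and a sequence $p_n\to p$ in $D$; the goal is $\liminf_n \bfr(p_n)\ge \bfr(p)$. Passing first to a subsequence, I may assume $\bfr(p_n)\to L:=\liminf_n \bfr(p_n)$. For each $n$ the function $f_{p_n}$ is lower-semicontinuous on the compact sphere $S^{n-1}$ by Lemma \ref{lem:semi-cont-1}, hence attains its minimum at some direction $v_n$, so that $\bfr(p_n)=H(p_n,v_n)$. By compactness of $S^{n-1}$ I may pass to a further subsequence along which $v_n\to v$ for some $v\in S^{n-1}$, while still $\bfr(p_n)\to L$. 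Applying the joint lower-semicontinuity of $H$ to $(p_n,v_n)\to(p,v)$ then yields
$$
L=\lim_n H(p_n,v_n)\ge H(p,v)\ge \min_{w\in S^{n-1}} H(p,w)=\bfr(p),
$$
where the first inequality is exactly the lower-semicontinuity of $H$ (the limit on the left exists and hence coincides with its $\liminf$). This establishes the claim at interior points.

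The point demanding care — and the main conceptual obstacle — is that an infimum of lower-semicontinuous functions is \emph{not} in general lower-semicontinuous; it is the supremum that preserves this property. What rescues the argument is the conjunction of two features: the \emph{joint} lower-semicontinuity of $H$ in the pair $(p,v)$, rather than mere lower-semicontinuity in $p$ for each fixed $v$, and the compactness of $S^{n-1}$. Compactness is used twice and essentially: it guarantees that the minimum defining $\bfr(p_n)$ is actually attained at some $v_n$, and it permits extraction of a convergent subsequence of these minimizers, so that no minimizing direction can escape to a region where control of $H$ is lost. Without compactness of the sphere the minimizing directions could fail to accumulate and the conclusion could break down.
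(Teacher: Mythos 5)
Your proof is correct and follows essentially the same route as the paper: pick minimizing directions $v_n$ with $\bfr(p_n)=H(p_n,v_n)$, extract a convergent subsequence using compactness of $S^{n-1}$, and invoke the joint lower-semicontinuity of $H$ from Lemma \ref{lem:semi-cont-1}, with the boundary points handled trivially since $\bfr$ vanishes there. Your version is in fact slightly more careful than the paper's in first passing to a subsequence realizing the $\liminf$ before extracting convergent directions.
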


\begin{proof}
 We first study the function $\bfr: D\to \R_+$. As $\bfr(p)=\min_v H(p,v)$, the lower-semicontinuity of
 $H$ gives the lower-semicontinuity of $\bfr$ : If $p_n\to p$, take $v_n$ such that $\bfr(p_n)=H(p_n,v_n)$.
 After taking a subsequence, we can assume that $(p_n,v_n)\to (p,v)$. So
  $$
  \underline{\lim}\ \bfr(p_n) = \underline{\lim}\ H(p_n,v_n)\geq H(p,v) \geq \bfr(p)\, ,
  $$
 as required.

 Finally, as we define $\bfr(p)=0$ if $p\in\bd D$, those points give no problem to lower-semicontinuity.
\end{proof}

We have some easy examples where $f_p$ or $\bfr$ are not continuous. For instance, if we consider the domain
 $$
 D= \{(x,y) | x^2+y^2< 1, x \leq 0\}\cup \{(x,y) | x^2+y^2< 4, x> 0\} \, ,
 $$
and let $p=(0,0)$. Then $f_p:S^1 \to \R^+$ has constant value $3$ except at the horizontal vectors where it
has value $2$. Also $\bfr$ is not continuous, since $\bfr(p)=2$, but $\bfr((\epsilon, 0)) \approx 3$, for
$\epsilon>0$ small.

\begin{rem} \label{rem:a}
 If $D$ is convex, then $\bfr:D\to \R_+$ is continuous. Let $p_n\to p$. Take $w$ so that $H(p,w)=\bfr(p)$. Then
 $\bfr(p)=H(p,w)=\lim H(p_n,w) \geq \overline{\lim}\ \bfr(p_n)$, using the continuity of $H$ and $H(p_n,w)\geq
 \bfr(p_n)$. So $\bfr$ is upper-semicontinuous, and hence continuous.
\end{rem}

The function $\bfr:\overline{D} \to \R_{\geq 0}$ may not be continuous, even for convex domains. Take
a semicircle $\{(x,y) | x^2+y^2< 1, x> 0\}$. Then $\bfr((\epsilon,0))=1$, for $x>0$ small, but $\bfr((0,0))=0$.

\medskip

We introduce the sets:
\begin{eqnarray*}
  D_r &=& \{ p\in {D} \, | \, \bfr(p)> r\}, \\
  E_r &=& \overline{\{ p\in D \, | \, \bfr(p)\geq r\}}.
\end{eqnarray*}
$D_r$ is open by Proposition \ref{prop:semi-cont-2}, and $E_r$ is compact.
The function $\bfr$ is clearly bounded, so it has a suppremum.

\begin{defi}
 We call $R=\sup \bfr$ the \emph{inaccessibility} of $D$. We call
 $$
 I_D:= \bigcap_{r<R} E_r
 $$
the set of points of \emph{maximum inaccessibility} of $D$.
\end{defi}

\bigskip

The set $I_D$ may intersect the boundary of $D$. For instance, $D=\{(x,y) |
x^2+y^2 < 1, x> 0\}$. Then $R=1$. It can be seen that $I_D=E_R=\{(x,0)| 0 \leq x\leq \frac{\sqrt{3}}2\}$.

Moreover, $I_D$ can be a point of the boundary. Take $D=\{(x,y) | \frac{x^2}{4} + y^2<1\}- \{(x,0) | x\leq 0\}$.
Then $R=2$, and $I_D=\{(0,0)\}$. The sets $D_r$, for $1<r<2$ are petals with vertex at the origin.

\begin{figure}[h] %{Flow-box}
\centering
\resizebox{7cm}{!}{\includegraphics{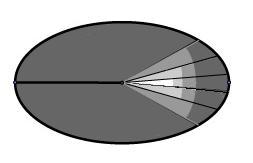}}    % name of the file - without extension
\caption{The sets $D_r$ for the ellipse with a long axis removed.
The set $I_D$ is in the boundary}
\end{figure}

Note that $\bfr$ does not achieve the maximum is equivalent to $I_D\subset \bd D$. This does
not happen for convex $D$, as will be seen in the next section.

%%%%%%%%%%%%%%%%%%%%%%%%%%%%%%%%%%%%%%%%%%%%%%%%%%%%%%%%%%%%%%%%%%%%%%%%%%%%
\section{Convex domains} \label{sec:convex}
%%%%%%%%%%%%%%%%%%%%%%%%%%%%%%%%%%%%%%%%%%%%%%%%%%%%%%%%%%%%%%%%%%%%%%%%%%%%

{}From now on, we shall suppose that $D$ is a convex bounded domain.
This means that if $x,y\in D$, then
the segment $[x,y]$ is completely included in $D$.
There are several easy facts: $\overline D$ is a compact convex set,
the interior of $\overline D$ is $D$, and $\overline D$ is the convex hull of $\bd D$.

There is an alternative characterization for convex sets.  Let $v$ be a unit vector
in $\R^n$. Then the function $f(x)=\la x, v\ra$ achieves its maximum in
$\bd D$, say $c$. Then $f(x)\leq c$ for $x \in D$. Consider the half-space
  $$
  H_v^-= \{ x \in \R^n \, | \, f(x) < c\}\, .
  $$
Then $D\subset H_v^-$. We call
  $$
  H_v= \{ x \in \R^n \, | \, f(x) = c\}\,
  $$
a \emph{supporting hyperplane} for $D$. Note that $\bd D\cap H_v \not= \varnothing$.
%If $p\in \bd H\cap H_v$, we say that $H_v$ is a supporting hyperplane at $p$.
Let also $H_v^+= \{ x \in \R^n \, | \, f(x) > c\}$.

\begin{lem} \label{lem:Int-Hv}
The convex set $D$ is the intersection
  $$
  \bigcap_{|v|=1} H_v^- \, ,
  $$
and conversely, any such intersection is a convex set.
Moreover,
 $$
 \overline D=\bigcap_{|v|=1} \overline{H}{}_v^- \, .
 $$
\end{lem}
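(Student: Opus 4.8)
The plan is to prove the three assertions in order, exploiting the supporting-hyperplane machinery already set up. For the first equality $D = \bigcap_{|v|=1} H_v^-$, I would argue by double inclusion. The inclusion $D \subset \bigcap_v H_v^-$ is immediate from the construction: for each unit $v$, the constant $c$ is defined so that $f(x) = \la x, v\ra < c$ for all $x \in D$, which is precisely the statement $D \subset H_v^-$. For the reverse inclusion, I would prove the contrapositive: if $p \notin D$, I must produce a unit vector $v$ with $p \notin H_v^-$, i.e. $\la p, v\ra \geq c_v$. Since $D$ is open and convex and $p \notin D$, either $p \in \bd D$ or $p$ lies outside $\overline D$; in both cases $p \notin D$ (open), so I apply the separating/supporting hyperplane theorem for convex sets to the convex set $\overline D$ and the point $p$, obtaining a unit vector $v$ realizing the separation.

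The converse statement, that any intersection $\bigcap_v H_v^-$ is convex, is the routine direction: each half-space $H_v^-$ is convex (it is a sublevel set of the linear functional $\la \cdot, v\ra$), and an arbitrary intersection of convex sets is convex. I would dispatch this in one or two lines.

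For the closure statement $\overline D = \bigcap_{|v|=1} \overline{H}{}_v^-$, I would again use double inclusion. Since each $\overline{H}{}_v^-$ is closed and contains $D$, it contains $\overline D$, giving $\overline D \subset \bigcap_v \overline{H}{}_v^-$. For the reverse, I would use the facts recalled at the start of the section: $\overline D$ is the convex hull of $\bd D$, its interior is $D$, and each closed half-space $\overline{H}{}_v^-$ meets $\bd D$ (since $\bd D \cap H_v \neq \varnothing$). If $p \in \bigcap_v \overline{H}{}_v^-$ but $p \notin \overline D$, then separating $p$ from the compact convex set $\overline D$ yields a unit $w$ with $\la p, w\ra > \sup_{x\in \overline D}\la x, w\ra = c_w$, contradicting $p \in \overline{H}{}_w^- = \{\la x, w\ra \leq c_w\}$.

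The main obstacle is the one genuinely nontrivial ingredient: the supporting hyperplane theorem, which guarantees that a point not in a closed convex set can be strictly separated from it by a hyperplane. Everything else is bookkeeping with linear functionals and inclusions. I would either cite this as a standard fact about convex sets in $\R^n$ or, if a self-contained argument is wanted, construct the separating direction as $v = (p - q)/|p-q|$, where $q$ is the (unique) nearest point of $\overline D$ to $p$, and check via the convexity of $\overline D$ that $\la x - q, p - q\ra \leq 0$ for all $x \in \overline D$, which delivers the required inequality $\la p, v\ra > \la x, v\ra$ and hence identifies the supporting direction.
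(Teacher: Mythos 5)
Your argument follows essentially the same route as the paper: the forward inclusions are definitional, the converse is the triviality that half-spaces are convex and intersections of convex sets are convex, and the substance lies in separating a point $p\notin D$ from $D$ by a supporting hyperplane. If you are allowed to quote the separating/supporting hyperplane theorem as a standard fact, everything you write is correct, and you even handle the ``moreover'' clause about $\overline D$ explicitly, which the paper's proof leaves implicit.

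The one point to watch is your self-contained fallback. The nearest-point construction $v=(p-q)/|p-q|$ with $q$ the point of $\overline D$ closest to $p$ only makes sense when $p\notin\overline D$; for $p\in\bd D$ the nearest point is $p$ itself and the construction degenerates, while strict separation is in any case impossible there (you only get $\la p,v\ra=c_v$, which is still enough to conclude $p\notin H_v^-$ since $H_v^-$ is open). This boundary case is exactly where the paper does extra work: it approximates $p$ by exterior points $p_n\notin\overline D$, applies the nearest-point argument to each, and passes to a subsequential limit of the resulting pairs $(q_n,v_n)$ to produce a supporting hyperplane through $p$. So if you intend the proof to be self-contained rather than citing the supporting hyperplane theorem at boundary points, you need to add that limiting argument (or an equivalent one); as a citation-based proof your write-up is complete.
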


\begin{proof}
The second assertion is clear, since the intersection of convex sets is convex.

For the first assertion, we have the trivial inclusion
$D\subset \bigcap_{|v|=1} H_v$. Now suppose $p\not\in D$. We have two cases:
 \begin{itemize}
 \item $p\not\in \overline D$. Then take $q\in \overline D$ such that $d(p,q)$ achieves its
 minimum, say $s>0$. Let $v$ be the unit vector from $p$ to $q$. Let $H_v$ be the hyperplane
 through $q$ determined by $v$. It is enough to see that the half-space $H_v^+$ is disjoint from
 $D$, since $p\in H_v^+$. Suppose that $x\in D\cap H_v^+$. Then the segment from $q$ to $x$
 should be entirely included in $\overline D$, but it intersects the interior of the ball
 of centre $p$ and radius $s$. This contradicts the choice of $q$.
 \item $p\in\bd D$. Consider $p_n\to p$, $p_n\not\in \overline D$. By the above, there
 are $q_n\in \bd D$ and vectors $v_n$ such that $D\subset H_{v_n}^- = \{ \la x-q_n,v_n\ra <
 0\}$. We take subsequences so that $q_n\to q\in\bd D$ and $v_n\to v$. So
 $D\subset \overline{H}{}_v^-=\{ \la x-q,v\ra  \leq 0\}$. But $D$ is open, so $D\subset H_v^-$.
 Moreover, as $d(p_n,D)\to 0$, then $d(p_n,q_n)\to 0$, so $p=q$, and the hyperplane
 determining $H_v$ goes through $p$, so $p\not\in H_v^-$ (actually $p\in H_v$).
 \end{itemize}
\end{proof}

\begin{rem} \label{rem:corner}
 The proof of Lemma \ref{lem:Int-Hv} shows that if $p\in \bd D$, then there is a supporting
 hyperplane $H_v$ through $p$. We call it a \emph{supporting hyperplane at $p$}, and we call
 $v$ a \emph{supporting vector at $p$}.
 When a point $p$ has several supporting hyperplanes, it is called a \emph{corner point}.
 The set
  $$
  \R_ + \cdot \{ v | v \text{ is supporting vector at } p\} \subset \R^n
  $$
 is convex.
 Note that if $\bd D$ is piecewise smooth, and $p\in \bd D$ is a smooth point, then
 $p$ is non-corner and the tangent space to $\bd D$ is the supporting hyperplane.
\end{rem}

Now we want to study the sets $D_r$ and $E_r$. First note that $\bfr$ is continuous on $D$. Therefore
 $$
 E_r\cap D =\{x | \bfr(x)\geq r \}
 $$
is closed on $D$.

\begin{prop} \label{lem:supremum}
If $D$ is convex, then $\bfr$ achieves its supremum $R$ at $D$.
Moreover, $I_D \cap D=E_R\cap D=\{p \, | \,  \bfr(p)=R\}$ and $I_D=E_R$ (which
is the closure of $E_R\cap D$).
\end{prop}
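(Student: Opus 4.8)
The plan is to split the statement into a purely formal part (the set identities of the ``Moreover'') and a genuinely geometric part (attainment of the supremum), and to reduce everything to two inputs: that $\bfr$ is continuous on the open set $D$ (Remark \ref{rem:a}) and that $R=\sup\bfr$.

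First I would dispose of the identities. Since $\bfr$ is continuous on $D$, the set $S_r:=\{p\in D\mid \bfr(p)\ge r\}$ is closed in $D$, and $D_r=\{p\in D\mid\bfr(p)>r\}$ is open (Proposition \ref{prop:semi-cont-2}). I claim $E_r\cap D=S_r$: if $x\in\overline{S_r}\cap D$ then $x=\lim s_k$ with $s_k\in S_r\subset D$ and $x\in D$, so $\bfr(x)\ge r$ by continuity. Intersecting over $r<R$, $I_D\cap D=\bigcap_{r<R}(E_r\cap D)=\{p\in D\mid \bfr(p)\ge r\ \text{for all } r<R\}=\{p\in D\mid\bfr(p)\ge R\}$, and because $R=\sup\bfr$ this equals $\{p\in D\mid\bfr(p)=R\}=E_R\cap D$. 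This gives $I_D\cap D=E_R\cap D=\{\bfr=R\}\cap D$ with only continuity and the definitions, but leaves open whether this set is nonempty.

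The crux is the first assertion, namely $\{\bfr=R\}\cap D\ne\varnothing$. I would take a maximizing sequence $p_n\in D$, $\bfr(p_n)\to R$, with a shortest chord $[A_n,B_n]$ through $p_n$, so $A_n,B_n\in\bd D$ and $|A_nB_n|=\bfr(p_n)$. By compactness of $\bd D$ pass to $A_n\to A$, $B_n\to B$, $p_n\to p\in\overline D$, giving $A,B\in\bd D$, $p\in[A,B]$ and $|AB|=R>0$. If $p\in D$, continuity of $\bfr$ on $D$ yields $\bfr(p)=R$ and we are done; the difficulty is $p\in\bd D$. To force the limit point into the interior of the limiting chord I would recentre, replacing $p_n$ by a better-centred point of $[A_n,B_n]$ (for instance its midpoint, after verifying that recentring does not lower $\bfr$). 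Then the limit lies in the open segment $(A,B)$, and by the dichotomy recorded after Lemma \ref{lem:semi-cont-1} (a closed segment with endpoints in $\bd D$ either lies in $\bd D$ or meets $\bd D$ only at its endpoints) we have $(A,B)\subset D$ whenever $[A,B]\not\subset\bd D$, so the limit is interior and realizes $\bfr=R$. I expect the real obstacle to be exactly the degenerate case $[A,B]\subset\bd D$: a flat boundary face along which $\bfr$ creeps up to $R$ (as already happens on the flat edge of the half-disc). Here I would use that $[A_n,B_n]$ is shortest to conclude that every chord through the recentred point, in particular the one normal to the face, has length $\ge\bfr(p_n)\to R$, so $D$ has extent $\ge R$ in the normal direction; an interior point on that normal chord should then realize $\bfr=R$. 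Controlling this perpendicular perturbation is the step I expect to be delicate.

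Finally, for $I_D=E_R$: the inclusion $E_R\subseteq I_D$ is immediate, since $\{\bfr\ge R\}\subseteq\{\bfr\ge r\}$ gives $E_R\subseteq E_r$ for all $r<R$. For $I_D\subseteq E_R$, a point of $I_D\cap D$ lies in $\{\bfr=R\}\cap D\subseteq E_R$ by the identities above, so only $p\in I_D\cap\bd D$ remains, and I must produce interior points with $\bfr=R$ accumulating at $p$. Here convexity enters decisively. Fix an interior maximizer $q$ with $\bfr(q)=R$ (from the attainment step); then $q\in D_r$, an open set, for every $r<R$, so $q$ is interior to each $\{\bfr\ge r\}$. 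Granting the quasi-concavity of $\bfr$ — i.e.\ convexity of the superlevel sets $\{\bfr\ge r\}$, the property underlying the whole paper — the half-open segment from $q$ to $p\in\bigcap_{r<R}\overline{\{\bfr\ge r\}}$ stays inside each $\{\bfr\ge r\}$, hence inside $\bigcap_{r<R}\{\bfr\ge r\}=\{\bfr=R\}$, and letting the parameter tend to its endpoint gives $p\in\overline{\{\bfr=R\}\cap D}=E_R$. Thus the main obstacle is of a single nature: understanding $\bfr$ as points approach $\bd D$ — both to attain $R$ at an interior point despite maximizing sequences that may escape to the boundary, and to propagate the value $R$ from an interior maximizer out to $\bd D\cap I_D$, for which the convexity of the superlevel sets of $\bfr$ is the essential geometric input.
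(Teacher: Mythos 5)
Your formal reductions are fine: $E_r\cap D=\{\bfr\ge r\}\cap D$ by continuity of $\bfr$ on $D$, hence $I_D\cap D=\{\bfr=R\}\cap D=E_R\cap D$, and $E_R\subset I_D$ is immediate. But both genuinely geometric steps have gaps. First, the recentring: if $m_n$ is the midpoint of the shortest chord $[A_n,B_n]$ through $p_n$, that same chord still passes through $m_n$, so $\bfr(m_n)\le \|A_n-B_n\|=\bfr(p_n)$ --- recentring can only lower $\bfr$, never raise it, and by Lemma \ref{prop:important}(2) (points of $[P,Q]\setminus\{p\}$ near $p$ satisfy $\bfr<r$) it generically lowers it strictly. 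So the verification you defer would fail: the recentred sequence need not be maximizing, and you cannot conclude $\bfr=R$ at its limit. Second, the degenerate case $[A,B]\subset\bd D$, which you flag as ``delicate'', is exactly where the content of the proposition lies, and your sketch does not close it: knowing that the single chord normal to the face has length $\ge R$ controls $f_p$ in one direction only, whereas $\bfr$ is an infimum over all directions; you still need to exhibit a point through which \emph{every} chord has length $\ge R$.

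The paper's proof avoids maximizing sequences altogether. $I_D$ is nonempty as a nested intersection of nonempty compacta; for $p\in I_D\cap\bd D$ with supporting hyperplane $H_v$, one first shows the open semiball $B_R(p)\cap H_v^-$ is contained in $D$ (otherwise a boundary point $q$ at distance $<R$ on the inner side would make a whole neighbourhood of $p$ in $D$ $r_0$-accessible for some $r_0<R$, contradicting $p\in E_r$ for all $r<R$), and then checks that points slightly inside $D$ along the inward normal at $p$ have every chord of length $\ge R$, since each chord endpoint lies either on $H_v$ or outside $B_R(p)$. This single argument yields $I_D\cap\bd D\subset E_R$ \emph{and} produces interior points with $\bfr=R$, which is what delivers attainment. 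It also removes your third weak point: you invoke convexity of the superlevel sets $\{\bfr\ge r\}$ (Theorem \ref{teo:convex}, proved later and stated for $R=\max\bfr$) to push the value $R$ from an interior maximizer out to $I_D\cap\bd D$; the semiball argument makes that forward reference unnecessary.
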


\begin{proof}
 Let $p\in I_D \cap \bd D$, and take a supporting
 hyperplane $H_v$ at $p$. We claim that the open semiball $B_R(p) \cap H_v^-\subset D$.
 If not, then there is a point $q\in \bd D$, $d(p,q)<R$, $q\in H_v^-$. Then all the
 segments $[q,x]$, with $x\in B_\epsilon(p)\cap \bd D$, have length $\leq r_0<$ (for
 suitable small $\epsilon$). Therefore, there is neighbourhood $U$ of $p$ such that
 $\bfr(x)\leq r_0$, $\forall x\in U\cap D$. Contradiction.

 Now all points in the ray $p+ t v$, $t\in (0,\epsilon)$ are not $r$-accessible for any
 $r<R$. Therefore they belong to $\bfr^{-1}(R)$. So $p$ is in the closure of $\bfr^{-1}(R)$,
 which is $E_R$.

 Therefore $I_D\cap \bd D \subset E_R$. Also, by continuity of $\bfr$ on $D$, we
 have that $E_r\cap D=\bfr^{-1}[r,\infty)$. Thus
 $I_D\cap D = \bigcap_{r<R} \bfr^{-1}[r,\infty) = \bfr^{-1}(R) =E_R \cap D$.
 All together, $I_D\subset E_R$. Obviously, $E_R\cap D=\bfr^{-1}(R)\subset I_D$,
 and taking closures, $E_R=\overline{E_R\cap D}\subset I_D$. So $I_D=E_R$.
 Finally, as $I_D$ is always non-empty, we have that $R$ is achieved by $\bfr$.
\end{proof}

Now we prove a useful result.
Given two points $P,Q$, we denote $\overrightarrow{PQ}=Q-P$ the vector from $P$ to $Q$.

\begin{lem} \label{prop:important}
 Let $p\in D$ and $r=\bfr(p)$. Let $[P,Q]$ be a segment of length $r$ with $P,Q\in \bd D$
 and $p\in [P,Q]$. Let $v_P,v_Q$ be supporting vectors at $P,Q$ respectively. Then
 \begin{enumerate}
  \item If $v_P,v_Q$ are parallel, then: $v_P=-v_Q$, $P,Q$ are non-corner points,
  $\overrightarrow{PQ} \parallel v_P$, and $r=R$.
  \item If $v_P,v_Q$ are not parallel, then: $\overrightarrow{PQ}$ is in the plane $\pi$ spanned by them,
  there is a unit vector $v \perp \overrightarrow{PQ}$, $v \in \pi$,
  such that for $H_v=\{\la x-p,v\ra =0\}$,
  it is $E_r \subset \overline{H}{}_v^-$; and $\bfr(x)<r$ for $x\in [P,Q]-\{p\}$ close to $p$.
  \end{enumerate}
\end{lem}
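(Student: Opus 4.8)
The plan is to work on the chord line $\ell=P+\R u$, where $u=\overrightarrow{PQ}/r$ is the unit chord direction, and to exploit that $u$ realizes the minimum of $f_p$. Writing $p=P+b\,u=Q-a\,u$ with $a,b>0$, $a+b=r$, the inclusion $D\subset\overline{H}{}_{v_P}^-\cap\overline{H}{}_{v_Q}^-$ (Lemma~\ref{lem:Int-Hv}) evaluated at $Q$ and at $P$ gives at once $\la u,v_P\ra\le 0\le\la u,v_Q\ra$. The central device is a rotation $v(t)=(\cos t)u+(\sin t)w$ with $w\perp u$ a unit vector: its chord through a base point $x$ has endpoints $P(t),Q(t)\in\bd D$, and at smooth boundary points these move tangentially to the supporting hyperplanes, i.e. $\la\dot P(0),v_P\ra=\la\dot Q(0),v_Q\ra=0$. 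Writing the signed parameters $s_P,s_Q$ of the endpoints and using $f_x(v(t))=s_Q(t)-s_P(t)$ reduces all of the infinitesimal analysis to linear algebra in $u,w,v_P,v_Q$.

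For part (1) I would first exclude $v_P=v_Q$: then $\la u,v_P\ra=0$, so the common supporting hyperplane through $P$ and $Q$ also contains the interior point $p$, contradicting $p\in D\subset H_{v_P}^-$. Hence $v_P=-v_Q=:v$, and $D$ lies in the slab between the two parallel hyperplanes, of width $r|\la u,v\ra|\le r$. Since the $v$-chord through $p$ sits in this slab, $r\le f_p(v)\le r|\la u,v\ra|\le r$, forcing $|\la u,v\ra|=1$, i.e. $\overrightarrow{PQ}\parallel v_P$ and slab width exactly $r$. Then every $x\in D$ has $f_x(u)\le r$, so $R=\sup\bfr=r$.

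For part (2), the first-order optimality $\tfrac{d}{dt}f_p(v(t))|_{0}=0$ for all $w\perp u$ yields $\tfrac{a}{\la u,v_Q\ra}v_Q+\tfrac{b}{\la u,v_P\ra}v_P\in\R u$; this combination is nonzero because $v_P\nparallel v_Q$, so $u\in\pi=\mathrm{span}(v_P,v_Q)$, which is the asserted statement $\overrightarrow{PQ}\in\pi$. Set $\hat v_P=v_P/\la u,v_P\ra$, $\hat v_Q=v_Q/\la u,v_Q\ra$ and let $v$ be the positive unit multiple of $\hat v_Q-\hat v_P$, a nonzero element of $\pi\cap u^\perp$. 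The key inequality for (b) is that for \emph{any} $z\in D$ the $u$-chord through $z$ is trapped between $H_{v_P}$ and $H_{v_Q}$, whence $f_z(u)\le(a-\la z-p,\hat v_Q\ra)+(b+\la z-p,\hat v_P\ra)=r-\la z-p,\hat v_Q-\hat v_P\ra$. Thus $\la z-p,v\ra>0$ forces $f_z(u)<r$, so $\bfr(z)<r$; therefore $\{\bfr\ge r\}\subset\overline{H}{}_v^-$ and hence $E_r\subset\overline{H}{}_v^-$. For (c), taking $x=p+\sigma u$ and differentiating $\tfrac{d}{dt}f_x(v(t))|_0$ in $\sigma$ produces $\la w,\hat v_Q-\hat v_P\ra$, nonzero for $w$ along $v$; so for small $\sigma\ne0$ the direction $u$ is no longer critical and $\bfr(x)\le\min_t f_x(v(t))<f_x(u)=r$.

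The main obstacle is the presence of corner points: at a corner the tangency $\la\dot P(0),v_P\ra=0$ fails, so the first- and second-order computations above must be replaced by one-sided directional derivatives governed by the normal cone of Remark~\ref{rem:corner}, and the chosen $v_P,v_Q$ need no longer be the unique normals. In particular the non-corner assertion in (1) is not formal: I would derive it by applying the case-(2) analysis to an auxiliary supporting vector at a hypothetical corner—paired with $v_Q=-v_P$, which is then non-parallel—and contradicting the slab structure together with the equality $r=R$ just established. The degenerate sub-cases $\la u,v_P\ra=0$ or $\la u,v_Q\ra=0$ (the chord tangent to a supporting hyperplane) also require separate treatment, since then $\hat v_P$ or $\hat v_Q$ is undefined and the vector $v$ must be produced directly from $\pi\cap u^\perp$.
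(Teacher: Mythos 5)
Your part (1) and your part (2)(b) are correct and fully rigorous, and in fact cleaner than the paper's versions: the squeeze $r\le f_p(v_P)\le r|\la u,v_P\ra|\le r$ disposes of $\overrightarrow{PQ}\parallel v_P$ and $r=R$ in one line, and the estimate $f_z(u)\le r-\la z-p,\hat v_Q-\hat v_P\ra$ is a tidy way to produce the separating hyperplane $H_v$ (note also that your worried-about degenerate sub-case $\la u,v_P\ra=0$ or $\la u,v_Q\ra=0$ simply cannot occur: it would put the whole segment $[P,Q]$, hence the interior point $p$, inside a supporting hyperplane, which is disjoint from $D$; so in fact $\la u,v_P\ra<0<\la u,v_Q\ra$ always).

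The genuine gap is in (2)(a), (2)(c) and the non-corner assertion of (1), all of which you base on the identities $\la\dot P(0),v_P\ra=\la\dot Q(0),v_Q\ra=0$ for the rotating chord. These require the endpoint maps $t\mapsto P(t),Q(t)$ to be differentiable with velocity tangent to the (unique) supporting hyperplane, which fails precisely at corner points --- and the lemma is stated for \emph{arbitrary} supporting vectors at possibly-corner points $P,Q$ (indeed the whole point of the later application is to polygons, whose boundaries are full of corners). You acknowledge this as ``the main obstacle'' and propose replacing derivatives by one-sided directional derivatives governed by the normal cone, but you do not carry this out, and the place where it is most needed is exactly where it is least available: your plan for proving that $P,Q$ are non-corner in case (1) is to run the case-(2) analysis at a hypothetical corner of $Q$, i.e.\ at a point where, by hypothesis, the tangency identity for $\dot Q(0)$ is false. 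As written the argument is therefore circular there, and the promised contradiction with ``the slab structure together with $r=R$'' is not actually derived (your rigorous (2)(b) inequality alone only yields a half-space on which $\bfr<r$, which is not by itself contradictory). The paper avoids all differentiation: it proves $\overrightarrow{PQ}\in\pi$, the separation, and the statement $\bfr<r$ on $[P,Q]\setminus\{p\}$ purely by exhibiting competitor segments $[P',Q']$ with $P'\in H_{v_P}$, $Q'\in H_{v_Q}$ (hence $P',Q'\notin D$) passing through the relevant point and strictly shorter than $r$; the non-corner claim then follows because the case-(2) comparison argument, applied to an auxiliary supporting vector $v_Q'$ at a corner, forces $\overrightarrow{PQ}\not\parallel v_P$ (the tangent-line condition on the hyperbola $xy+(1-t)a_2x-ta_1y=0$ would require $t=1$), contradicting $\overrightarrow{PQ}\parallel v_P$ from case (1). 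To complete your proof along your own lines you would either need to develop the one-sided/normal-cone calculus you allude to, or replace the variational steps by such comparison segments.
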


\begin{proof}
(1) Suppose first that $v_P,v_Q$ are parallel. So $D$ is inside the region between the parallel
 hyperplanes $H_{v_P}$ and $H_{v_Q}$. Clearly $v_P=-v_Q$. Let $x\in D$, and draw the segment
 parallel to $[P,Q]$ through $x$ with endpoints in the hyperplanes. It has length $r$. The
 intersection of this segment with $D$ is of length $\leq r$. Therefore $\bfr(x)\leq r$, for
all $x\in D$, so $R=r$.

If $\overrightarrow{PQ}$ is not parallel to
$v_P$, take a small vector $w$ such that $\la w,v_P\ra=0$, $\la w,\overrightarrow{PQ}\ra >0$. Let $t\in (0,1)$
so that $p=(1-t)P+ tQ$. Then $P'=P+tw \in H_{v_P}$ and $Q'=Q-(1-t)w \in H_{v_Q}$, and $p \in [P',Q']$.
First, $||\overrightarrow{P'Q'}||=|| \overrightarrow{PQ}-w||<||\overrightarrow{PQ}||=r$.
Also $P',Q' \not\in D$, so the segment $[P',Q']\cap D$ is of length
at most $||\overrightarrow{P'Q'}||$. Therefore
$\bfr(p)<r$, a contradiction.

The assertion that $P,Q$ are non-corner points is proved below.

(2) Suppose now that $v_P,v_Q$ are not parallel. Again $D$ is inside the region between the
hyperplanes $H_{v_P}$ and $H_{v_Q}$. Let $\pi$ be the plane spanned by $v_P,v_Q$. Let
$w$ be the projection of $\overrightarrow{PQ}$ on the orthogonal complement to $\pi$, and suppose $w\neq 0$.
Clearly $\la w,\overrightarrow{PQ}\ra >0$. Let $t\in (0,1)$
so that $p=(1-t)P+ tQ$. Then $P'=P+tw \in H_{v_P}$ and $Q'=Q-(1-t)w \in H_{v_Q}$, and $p \in [P',Q']$. So
$l([P',Q']\cap D) \leq ||\overrightarrow{P'Q'}|| <||\overrightarrow{PQ}||=r$, which is a contradiction.
Therefore $\overrightarrow{PQ}\in\pi$.

Let $v\in \pi$ be a unit vector such that $v \perp \overrightarrow{PQ}$.
Now consider unit vectors $e_1,e_2$ in $\pi$ so that $e_1\perp v_P$, $e_2\perp v_Q$,
The vector
 $$
 u= \frac{1}{\la e_1,v\ra  \, \la e_2,v\ra} (\la e_1,v\ra e_2 - \la e_2,v\ra e_1)
 $$
is perpendicular to $v$, hence parallel to $\overrightarrow{PQ}$. We arrange that $\la u,\overrightarrow{PQ}\ra <0$
by changing the sign of $v$ if necessary. Denote $H_v=\{\la x-p,v\ra =0\}$. Let us see that
this satisfies the statement.
Consider $w$ so that $\la w,v \ra > 0$. Let $w_1 = \frac{\la w,v\ra}{\la e_1,v\ra} e_1\in H_{v_P}$ and
$w_2 = \frac{\la w,v\ra}{\la e_2,v\ra} e_2\in H_{v_Q}$. Then
 $$
 w_2-w_1=\frac{\la w,v\ra}{\la e_1,v\ra  \, \la e_2,v\ra} (\la e_1,v\ra e_2 - \la e_2,v\ra e_1) = \la w,v\ra u\, ,
 $$
so $\la w_2-w_1, \overrightarrow{PQ}\ra =\la w,v\ra \la u, \overrightarrow{PQ}\ra <0$.
Set $P'=P+w_1$, $Q'=Q+w_2$. So $[P',Q']$ is parallel to $[P,Q]$, it goes through $p+w$,
and it is shorter than $[P,Q]$. % (unless $\la w,v \ra = 0$, in which case it is of same length).
So $H_v^+ \cap E_r=\varnothing$.

For the last assertion, we write
$\overrightarrow{PQ} = a_1 e_1 + a_2 e_2$, where $a_1,a_2\neq 0$.
Let $P'=P+ x e_1\in H_{v_P}$, $Q'= Q+ ye_2\in H_{v_Q}$.
The condition $p\in [P',Q']$ is equivalent to $p,P',Q'$ being aligned, which is rewritten as
 \begin{equation} \label{eqn:1}
 xy +(1-t)a_2 x-ta_1 y =0 \, .
 \end{equation}
Now, the condition  $||\overrightarrow{P'Q'}||=|| \overrightarrow{PQ}+ye_2-xe_1||<||\overrightarrow{PQ}||=r$
for small $r$ is achieved if $\la \overrightarrow{PQ},ye_2-xe_1\ra <0$.
This is a linear equation of the form $\alpha_1 x +\alpha_2 y<0$. The intersection of such half-plane
with the hyperbola (\ref{eqn:1}) is non-empty except if $\alpha_1 x +\alpha_2 y=0$ is tangent to the hyperbola
at the origin. So $(\alpha_1,\alpha_2)$ is a multiple of $((1-t)a_2, -ta_1)$.
This determines $t$ uniquely. So for $s\neq t$ (and close to $t$),
we have that $p_s=(1-s)P+sQ$ satisfies $\bfr(p_s)<r$.
(Note incidentally, that it cannot be $\overrightarrow{PQ} \parallel v_P$. If so, then $\alpha_1=0$, and then
$(1-t)a_2=0$, so $t=1$, which is not possible.)

Now we finish the proof of (1). Suppose that $Q$ is a corner point. Then we can choose another supporting
vector $v_Q'$. On the one hand $\overrightarrow{PQ}\parallel v_P=-v_Q$. On the other, as $v_P\not\parallel v_Q'$,
we must have $\overrightarrow{PQ} \not\parallel v_P$, by the discussion above. Contradiction.
\end{proof}

\begin{teo} \label{teo:convex}
 % The function $r:D\to \RR$ is explicitly quasiconcave.
 The sets $D_r$, $E_r$ are convex sets, for $r\in [0,R]$, $R=\max \bfr$.
 Moreover, $\bd D_r \cap D$ is $\bfr^{-1}(r)$, for $r\in (0,R)$.
\end{teo}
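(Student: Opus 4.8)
The plan is to reduce everything to the convexity of $E_r$ for $r\in(0,R)$, for which the engine is Lemma \ref{prop:important}(2). First I would record the easy cases: $D_0=D$ and $E_0=\overline D$ are convex, $D_R=\varnothing$ is vacuously convex, and once $E_r$ is known to be convex for $r\in(0,R)$ the identity $E_R=\bigcap_{r<R}E_r$ exhibits $E_R=I_D$ as an intersection of convex sets. The main step is therefore: for fixed $r\in(0,R)$, show $E_r$ is convex. Here I would invoke the classical criterion that a compact set with non-empty interior is convex as soon as every boundary point carries a supporting hyperplane (equivalently, $E_r$ coincides with the intersection $\bigcap_p \overline{H}{}_v^-$ of the supporting half-spaces over its boundary points $p$); a short self-contained proof can be inserted, by walking along a segment from an interior point of $E_r$ to a point outside it and using the supporting hyperplane at the last boundary point met. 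The interior of $E_r$ is non-empty because $D_r\neq\varnothing$ when $r<R$, and $E_r$ is compact.

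It then remains to produce a supporting hyperplane at each boundary point of $E_r$. The boundary points are of two types. If $p\in\bd D$, a supporting hyperplane of $D$ at $p$ (Remark \ref{rem:corner}) already contains $E_r\subset\overline D$ in a closed half-space and passes through $p$. If $p\in D$, then since $\{\bfr>r\}$ is open, a boundary point lying in $D$ must satisfy $\bfr(p)=r$. This is precisely where $r<R$ is essential: choosing a minimal segment $[P,Q]$ through $p$ and any supporting vectors $v_P,v_Q$, Lemma \ref{prop:important}(1) forces $r=R$ as soon as $v_P,v_Q$ are parallel, so for $r<R$ no admissible choice is parallel and Lemma \ref{prop:important}(2) directly furnishes a vector $v$ with $E_r\subset\overline{H}{}_v^-$ and $p\in H_v$. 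This proves $E_r$ convex. Convexity of $D_r$ for $r\in[0,R)$ then follows with no new work: given $p_1,p_2\in D_r$, put $r'=\min(\bfr(p_1),\bfr(p_2))>r$; both points lie in the convex set $E_{r'}\cap D=\{\bfr\geq r'\}$, hence so does the whole segment, which therefore lies in $D_r$.

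For the boundary identity $\bd D_r\cap D=\bfr^{-1}(r)$ with $r\in(0,R)$, the inclusion $\subseteq$ is immediate from continuity of $\bfr$ on $D$: a point of $\bd D_r\cap D$ is a limit of points with $\bfr>r$, so $\bfr\geq r$, and it is not in the open set $D_r$, so $\bfr\leq r$. The reverse inclusion is the delicate point. Fix $p$ with $\bfr(p)=r$ and a point $p^*\in D$ with $\bfr(p^*)=R$, which exists by Proposition \ref{lem:supremum}. I would show $\bfr>r$ on the open segment from $p^*$ to $p$, so that $p\in\overline{D_r}$ while $p\notin D_r$. Since $p^*,p\in E_r\cap D$ and this set is convex, one has $\bfr\geq r$ along the segment $x_s=(1-s)p^*+sp$. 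Suppose $\bfr(x_{s_0})=r$ for some $s_0\in(0,1)$; applying Lemma \ref{prop:important}(2) at $x_{s_0}$ yields $v$ with $E_r\subset\overline{H}{}_v^-$ and $x_{s_0}\in H_v$. As $p^*$ is interior to $E_r$ we get $\la p^*-x_{s_0},v\ra<0$, while $p\in E_r$ gives $\la p-x_{s_0},v\ra\leq 0$. But $p^*-x_{s_0}=s_0(p^*-p)$ and $p-x_{s_0}=(1-s_0)(p-p^*)$ point in opposite directions along the segment, so these two inequalities are incompatible. Hence $\bfr(x_s)>r$ for all $s\in[0,1)$, and letting $s\to1$ gives $p\in\overline{D_r}$.

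The main obstacle I anticipate is precisely this last strictness argument: convexity of $E_r$ only delivers $\bfr\geq r$ along segments running into $p$, and upgrading this to the strict inequality required for $p\in\overline{D_r}$ forces one to combine a genuinely interior point of maximal accessibility with the supporting hyperplane supplied by Lemma \ref{prop:important}(2). A secondary point to treat with care is the corner-point case in the choice of $v_P,v_Q$, which the contrapositive of Lemma \ref{prop:important}(1) settles cleanly, and the bookkeeping of the endpoint values $r=0$ and $r=R$, where the boundary identity genuinely fails and is rightly excluded.
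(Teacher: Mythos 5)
Your proposal is correct, and it runs on the same engine as the paper's proof, namely the supporting hyperplane that Lemma \ref{prop:important}(2) attaches to any $p\in D$ with $\bfr(p)=r<R$ (case (1) being excluded there because it forces $r=R$). The packaging of the convexity step is reversed: the paper works with $D_r$ first, producing for each $p\notin D_r$ an open half-space containing $D_r$ and missing $p$, so that $D_r$ is an intersection of half-spaces, and then gets $E_r=\overline{\bigcap_{\epsilon>0}D_{r-\epsilon}}$ for free; you prove $E_r$ convex directly via the classical criterion that a closed set with nonempty interior supported at every boundary point is convex (your sketch of that criterion is the standard one and is sound, and it does not need connectedness), and then deduce convexity of $D_r$ from that of $E_{r'}\cap D$ with $r'=\min(\bfr(p_1),\bfr(p_2))$. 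The genuine divergence is in the identity $\bd D_r\cap D=\bfr^{-1}(r)$: the paper argues by contradiction, using that a convex set with nonempty interior is the closure of its interior together with the asserted fact that $\bfr^{-1}(r)$ has empty interior for $r\in(0,R)$; you instead join $p$ to a point $p^*$ with $\bfr(p^*)=R$, which is interior to $E_r$, and show $\bfr>r$ on the half-open segment by ruling out an interior touching point $x_{s_0}$ via the strict versus non-strict inequality clash against the hyperplane $H_v$ of Lemma \ref{prop:important}(2). That computation ($p^*-x_{s_0}$ and $p-x_{s_0}$ being opposite multiples of $p-p^*$) is carried out correctly, and your route has the advantage of not resting on the ``no interior'' claim, which the paper leaves as an unproved easy consequence of the lemma; the paper's route is shorter once that claim is granted.
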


\begin{proof}
The assertion for $E_r$ follows from that of $D_r$: knowing that $D_r$ is convex, then
 $$
 E_r= \overline{\bigcap_{\epsilon>0} D_{r-\epsilon}}
 $$
 is convex since the intersection of convex sets is convex, and the closure of a convex set is convex.

Let $0<r<R$, and let us  see that $D_r$ is convex. Let $p\not\in D_r$. Then $\bfr(p)\leq r$. By
Lemma \ref{prop:important}, there is a segment $[P,Q]$ of length $r$, with $P,Q\in \bd D$, $v_P \not\parallel
v_Q$, and
a vector $v\perp \overrightarrow{PQ}$ such that $E_r\subset \overline{H}{}^-_v$. Then $D_r\subset H_v^-$, and
$p\not\in H_v^-$. So $D_r$ is the intersection of half-spaces, hence convex.

For the last assertion, note that the continuity of $\bfr$ implies that $D\cap \bd D_r \subset \bfr^{-1}(r)$.
For the reversed inclusion, suppose that $\bfr(p)=r$, but $p\not\in \bd D_r$. Then there is some $\epsilon>0$
so that $B_\epsilon(p) \subset \bfr^{-1}(0,r]$. Now $\bfr^{-1}[r,\infty)$ is convex, so it is the closure
of its interior, call it $V$. Therefore $V\cap B_\epsilon(p)$ is open, convex, and contains $p$ in its
adherence. Moreover $V\cap B_\epsilon(p)\subset \bfr^{-1}(r)$.
But this is impossible, since an easy consequence of Lemma \ref{prop:important}
is that $\bfr^{-1}(r)$ has no interior for any $r\in (0,R)$.
\end{proof}

\begin{prop} \label{prop:envelope}
Suppose $D$ is a convex \emph{planar} set. Let $r\in (0,R)$. Then $\bd D_r$ is
the envelope of the segments of length $r$ with endpoints at $\bd D$.
\end{prop}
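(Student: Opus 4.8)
The plan is to identify the family of length-$r$ chords with the supporting lines of the convex body $E_r$, and to use the classical fact that a convex curve is the envelope of its supporting lines. Recall from Theorem \ref{teo:convex} that for $r\in(0,R)$ the set $D_r$ is open and convex with $\bd D_r\cap D=\bfr^{-1}(r)$, so $C:=\bd D_r$ is a convex curve; since $D_r$ is a nonempty open convex set we have $E_r=\overline{D_r}$ and $\bd E_r=C$. I will show that (i) through every point of $C$ there passes a length-$r$ chord tangent to $C$, and (ii) every length-$r$ chord, extended to a full line, is a supporting line of $D_r$. Together these say that the length-$r$ chords are exactly the tangent lines to $C$, whence $C$ is their envelope.

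For (i), fix $p\in C\cap D$, so $\bfr(p)=r$, and let $[P,Q]$ be a minimal chord through $p$, of length $r$ with $P,Q\in\bd D$; this is a member of the family. Since $r<R$, Lemma \ref{prop:important}(1) forbids the case $v_P\parallel v_Q$, so we are in case (2): there is a unit vector $v\perp\overrightarrow{PQ}$ with $E_r\subset\overline{H}{}_v^-$, where $H_v=\{\la x-p,v\ra=0\}$. Thus $H_v$ is a supporting line of $E_r$ at $p$, and since $v\perp\overrightarrow{PQ}$ in the plane, $H_v$ is parallel to $\overrightarrow{PQ}$; as $p\in[P,Q]$, the whole chord lies on $H_v$. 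Hence $[P,Q]$ lies along the supporting line of $D_r$ at $p$, i.e. it is tangent to $C$ at $p$. So every point of $C$ is a tangency point of a family member.

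For (ii), let $[P,Q]$ be any length-$r$ chord and $L$ the line through $P,Q$. Every point $x\in[P,Q]$ lies on a chord of length $r$, so $\bfr(x)\le r$ and $x\notin D_r$; moreover $L\cap D$ is exactly the open chord. Therefore $L$ is a line disjoint from the open convex set $D_r$, so it is either disjoint from $\overline{D_r}$ or a supporting line of $D_r$ touching $C$. By (i) all supporting directions of $C$ are realised, so the tangency points of the family fill all of $C$. Consequently the length-$r$ chords sweep out the closed region $\overline{D}\setminus D_r$, and its inner boundary---equivalently the locus of points where neighbouring chords meet in the limit---is precisely $C=\bd D_r$. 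This is the asserted envelope.

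The main obstacle I expect is matching the classical differential envelope with this convex-geometric tangency locus: one must choose a smooth one-parameter parametrization of the family, handling the at most countably many corner points of the convex curve $C$ by continuity or approximation, and rule out spurious envelope branches arising from those family members that happen to be disjoint from $\overline{D_r}$. The latter is controlled because any intersection point of two length-$r$ chords again avoids the open convex set $D_r$, so any limiting tangency such chords could produce must again lie on $C$.
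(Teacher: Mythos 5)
Your proof is correct and follows essentially the same route as the paper's: the key step in both is that through each point $p\in\bd D_r\cap D$ a minimal chord $[P,Q]$ of length $r$ passes, and by Lemma \ref{prop:important}(2) (case (1) being excluded since $r<R$) the set $D_r$ lies on one side of the line through $P,Q$, so the chord is tangent to $\bd D_r$ at $p$. Your version is somewhat more explicit (in particular, the observation that in the plane $v\perp\overrightarrow{PQ}$ forces the chord to lie on the supporting line $H_v$, and the converse statement (ii)), but the underlying argument is the same.
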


\begin{proof}
As we proved before, the boundary of $D_r$ is $\bfr^{-1}(r)$, so the points of $\bd D_r$ are $r$-accessible,
but not $r'$-accessible for $r'<r$. %So $\bd D_r$ divides $D$ in two regions, $\bfr^{-1}(0,r)$ and $\bfr^{-1}(r,R]$.
Let $p\in \bd D_r$ be a smooth point. Then there is a segment of length $r$ and $D_r$ is at one side of it.
Therefore the segment is tangent to $\bd D_r$ at $p$.
\end{proof}

%%%%%%%%%%%%%%%%%%%%%%%%%%%%%%%%%%%%%%%%%%%%%%%%%%%%%%%%%%%%%%%%%%%%%%%%%%%%
\section{Strictly convex domains} \label{sec:strictly-convex}
%%%%%%%%%%%%%%%%%%%%%%%%%%%%%%%%%%%%%%%%%%%%%%%%%%%%%%%%%%%%%%%%%%%%%%%%%%%%

Recall that $D$ is \emph{strictly convex} if there is no segment included in its boundary.
We assume that $D$ is strictly convex in this section.
Therefore, for each unit vector $v$, there is a unique point of contact
$H_v\cap\bd D$. We define the function
  $$
  g: S^{n-1}\to \bd D \, .
  $$

\begin{lem}
 If $D$ is strictly convex, then $g$ is continuous.
\end{lem}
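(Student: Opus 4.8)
The plan is to run a standard compactness-plus-uniqueness argument, exploiting that $g(v)$ is characterized variationally as the unique maximizer of a linear functional. Recall that $H_v$ is the supporting hyperplane on which $f_v(x)=\la x,v\ra$ attains its maximum $c(v)=\max_{x\in\overline D}\la x,v\ra$ over the compact set $\overline D$. The point $g(v)$ is by definition the unique point of $H_v\cap\bd D$; its uniqueness is exactly strict convexity, since if $f_v$ attained its maximum at two distinct boundary points $q_1,q_2$, then the whole segment $[q_1,q_2]$ would lie in $H_v\cap\bd D$ by linearity of $f_v$, contradicting that $\bd D$ contains no segment. So $g$ is well defined, and $g(v)$ is the unique $x\in\overline D$ with $\la x,v\ra=c(v)$.

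Next I would take a sequence $v_n\to v$ in $S^{n-1}$ and aim to show $g(v_n)\to g(v)$. Since $\overline D$ is compact, the sequence $\bigl(g(v_n)\bigr)$ admits convergent subsequences; passing to one such, say $g(v_{n_k})\to q$, I note $q\in\bd D$ because $\bd D$ is closed. The key step is to identify $q$: for every $x\in\overline D$ the maximizing property gives $\la g(v_{n_k}),v_{n_k}\ra\ge\la x,v_{n_k}\ra$, and letting $k\to\infty$ (using joint continuity of the inner product together with $v_{n_k}\to v$ and $g(v_{n_k})\to q$) yields $\la q,v\ra\ge\la x,v\ra$ for all $x\in\overline D$. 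Hence $q$ is a maximizer of $f_v$ over $\overline D$, and by the uniqueness established above, $q=g(v)$.

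Finally I would upgrade subsequential convergence to convergence of the full sequence by the usual device: every convergent subsequence of $\bigl(g(v_n)\bigr)$ has the same limit $g(v)$, and the sequence lives in the compact set $\overline D$, so the whole sequence must converge to $g(v)$. (Concretely, if $g(v_n)\not\to g(v)$, one could extract a subsequence staying outside some fixed neighbourhood of $g(v)$, then a further convergent sub-subsequence whose limit is forced to be $g(v)$, a contradiction.) This proves continuity of $g$.

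The argument is almost entirely soft, so there is no serious computational obstacle; the one place requiring care is the passage to the limit in the inequality $\la g(v_{n_k}),v_{n_k}\ra\ge\la x,v_{n_k}\ra$, where I must let both arguments of the inner product vary simultaneously rather than freezing one. This is harmless because the inner product is continuous on $\R^n\times\R^n$, but it is the step that genuinely uses $v_n\to v$ (not merely boundedness) and is where strict convexity does its real work, since uniqueness of the maximizer is exactly what pins the limit point down to $g(v)$.
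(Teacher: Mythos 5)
Your proof is correct and follows essentially the same route as the paper: pass to a convergent subsequence in the compact set $\overline D$, take limits in the supporting-hyperplane (maximizer) inequality to conclude the limit point also maximizes $\la \cdot,v\ra$, and invoke strict convexity to identify it with $g(v)$. The only cosmetic difference is that the paper writes out the two cross-inequalities $\la p-q,v\ra\le 0$ and $\la q-p,v\ra\le 0$ explicitly, while you phrase it as uniqueness of the maximizer; you are also slightly more careful in making the subsequence-to-full-sequence upgrade explicit.
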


\begin{proof}
Let $v_n\in S^{n-1}$, $v_n\to v$. Consider $p_n=g(v_n)\in \bd D$, and the supporting hyperplane
$\la x-p_n,v_n\ra \leq 0$. Let $p=g(v)$, with supporting hyperplane $\la x-p,v\ra \leq 0$.
After taking a subsequence, we can suppose $p_n\to q\in \bd D$.
Now $p\in \overline D \implies \la p-p_n,v_n\ra \leq 0$, and taking limits, $\la p-q,v\ra \leq 0$.
On the other hand, $p_n\in \overline D \implies \la p_n-p,v\ra \leq 0$, and taking limits, $\la q-p,v\ra\leq 0$.
So $\la q-p,v\ra=0$. By strict convexity, $q=p$, so $g(v_n) \to g(v)$, and $g$ is continuous.
\end{proof}

Now suppose that $\bd D$ is $C^1$. Then for each point $p\in \bd D$, there
is a normal vector $\nn(p)$. We have a well defined function
  $$
  \phi: \bd D\to S^{n-1}, \quad \phi(p)=\nn(p)\, .
  $$
Note that $p\in H_{\nn(p)}\cap \overline D$. Therefore if $D$ is $C^1$ and strictly convex,
both $\phi$ and $g$ are defined and inverse to each other.

In general, for $D$ convex, there are pseudo-functions $g:S^n\to \bd D$, $\phi:\bd D\to S^n$.
A pseudo-function assigns to each point $v\in S^n$ a subset $g(v)\subset \bd D$
in such a way that the graph $\{(v,p)\, | \, p\in g(v)\}$ is closed. The inverse of
a pseudo-function is well-defined, and $g$ and $\phi$ are inverse to each other.
%In our case the sets $g(p)$ are convex (as subsets in $S^n$ with the usual metric
%of the sphere), and $\phi(q)$ are convex (in the euclidean space).
The set $\phi(p)$ is the set of supporting vectors at $p$ (see Remark \ref{rem:corner}).

\begin{lem}\label{lem:1} Suppose $D$ strictly convex.
For all $0<r<R$, $\bd D_r \cap \bd D=\varnothing$, so $\bd D_r=\bfr^{-1}(r)$.
\end{lem}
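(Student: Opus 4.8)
The plan is to reduce everything to the identity $\bd D_r\cap D=\bfr^{-1}(r)$ already established in Theorem \ref{teo:convex}. Since $D_r\subset D$ gives $\overline{D_r}\subset\overline D$, the topological boundary splits as $\bd D_r=(\bd D_r\cap D)\sqcup(\bd D_r\cap \bd D)$; hence once I show $\bd D_r\cap\bd D=\varnothing$, the stated equality $\bd D_r=\bfr^{-1}(r)$ follows immediately. So the whole content is the disjointness claim, which I would prove by contradiction: assume $p\in\bd D_r\cap\bd D$ and derive a contradiction from the fact that the inaccessibility must be small near a strictly convex boundary point.

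First I would unpack $p\in\bd D_r$: since $p\in\bd D$ we have $p\notin D_r$, so $p\in\overline{D_r}$ forces a sequence $x_n\in D_r$ with $x_n\to p$ and $\bfr(x_n)>r$. Then I would fix a supporting hyperplane $H_v$ at $p$ (Remark \ref{rem:corner}) with outward normal $v$, so that $D\subset H_v^-$, and introduce the cross-sections $S_\delta=\overline D\cap\{\la y-p,v\ra=-\delta\}$.

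The key geometric step, and the place where strict convexity really enters, is to show $\mathrm{diam}(S_\delta)\to 0$ as $\delta\to 0^+$; this is where I expect the main (though not severe) obstacle to lie. I would argue by compactness: if some $\delta_n\to 0$ admitted $a_n,b_n\in S_{\delta_n}$ with $|a_n-b_n|\geq c>0$, then passing to convergent subsequences $a_n\to a$, $b_n\to b$ in $\overline D$, the condition $\la a_n-p,v\ra=-\delta_n\to 0$ forces $a,b\in\overline D\cap H_v$. But strict convexity gives $\bd D\cap H_v=\{p\}$, and since $D$ lies in the open half-space $H_v^-$ we get $\overline D\cap H_v=\{p\}$; thus $a=b=p$, contradicting $|a-b|\geq c$.

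Finally I would combine the two ingredients. For each $n$, set $\delta_n=-\la x_n-p,v\ra$, which is $>0$ because $x_n\in D\subset H_v^-$ and $\to 0$ because $x_n\to p$; moreover $x_n\in S_{\delta_n}$. Choosing any direction $u\perp v$, the chord $\gamma_{x_n,u}$ lies entirely in the hyperplane $\{\la y-p,v\ra=-\delta_n\}$, so its endpoints lie in $\bd D\cap S_{\delta_n}$ and $f_{x_n}(u)\leq\mathrm{diam}(S_{\delta_n})$. Hence $\bfr(x_n)\leq\mathrm{diam}(S_{\delta_n})\to 0$, contradicting $\bfr(x_n)>r>0$. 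This yields $\bd D_r\cap\bd D=\varnothing$ and completes the proof.
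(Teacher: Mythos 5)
Your proof is correct and follows essentially the same route as the paper: both fix a supporting hyperplane $H_v$ at a putative point $p\in\bd D_r\cap\bd D$ and use strict convexity (via $\overline D\cap H_v=\{p\}$ plus compactness) to show that chords parallel to $H_v$ through nearby points of $D$ are short, forcing $\bfr$ to be small near $p$. The only difference is packaging: the paper argues directly with a ball of radius $r/2$ and a quantitative $\epsilon_0$, while you run a sequential contradiction via $\mathrm{diam}(S_\delta)\to 0$; both are fine.
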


\begin{proof}
Take a point $p\in \bd D$, and let $H_v$ be a supporting hyperplante.
Consider a small ball $B$ around $p$ of radius $\leq r/2$.
By strict convexity, $d(\bd B\cap D, H)=\epsilon_0>0$. Now we
claim that $B_{\epsilon_0}(p)\cap D$ does not intersect $D_r$, so $p\not\in \overline D_r$.
Let $q\in B_{\epsilon_0}(p)\cap D$, and consider a line $l$ parallel to $H$ through $q$. The
segment $l\cap B$ has endpoints $P,Q\in \bd B$. But $d(P,H)=d(Q,H)<\epsilon_0$, so
$P,Q\not\in D$. So the connected component
$[P,Q]\cap D$ has length $<||\overrightarrow{PQ}||<r$, and $q$ is $r'$-accessible
for some $r'<r$.
\end{proof}

\begin{cor}
For $D$ strictly convex,
$\bfr: \overline D \to \R_{\geq 0}$ is continuous.
\end{cor}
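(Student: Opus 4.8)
The plan is to combine the lower-semicontinuity of $\bfr$ on $\overline D$ (Proposition \ref{prop:semi-cont-2}) and its continuity on the interior $D$ (Remark \ref{rem:a}) with the strict-convexity input of Lemma \ref{lem:1}, reducing the problem to upper-semicontinuity at boundary points. At an interior point $p\in D$ a sequence in $\overline D$ converging to $p$ eventually lies in $D$, so continuity there is immediate from Remark \ref{rem:a}. Thus I would fix $p\in\bd D$, where $\bfr(p)=0$, and aim to show that for every $\epsilon>0$ there is a neighbourhood $U$ of $p$ with $\bfr<\epsilon$ on $U\cap\overline D$; together with $\bfr\geq 0$ this gives continuity at $p$.

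The key step is to convert Lemma \ref{lem:1} into a separation statement. For $0<r<R$ the lemma gives $\bd D_r\cap\bd D=\varnothing$. Since $D_r$ is open and contained in $D$, we have $D_r\cap\bd D=\varnothing$, so $\overline{D_r}=D_r\cup\bd D_r$ is disjoint from $\bd D$; as $\overline{D_r}\subset\overline D$, this forces $\overline{D_r}\subset\overline D\setminus\bd D=D$. Therefore $\overline{D_r}$ is a compact subset of the open set $D$, and hence $\delta_r:=d(\overline{D_r},\bd D)>0$.

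With this in hand the estimate near a boundary point follows quickly. Given $p\in\bd D$ and $0<\epsilon<R$, set $\delta=\delta_\epsilon$. Any $x\in B_\delta(p)\cap D$ satisfies $d(x,\bd D)\leq d(x,p)<\delta$, so $x\notin\overline{D_\epsilon}\supset D_\epsilon$; thus $\bfr(x)\leq\epsilon$. Any $x\in B_\delta(p)\cap\bd D$ has $\bfr(x)=0\leq\epsilon$. Hence $0\leq\bfr<\epsilon$ on $B_\delta(p)\cap\overline D$, which is the upper-semicontinuity (indeed continuity) of $\bfr$ at $p$. Combined with the interior continuity, $\bfr$ is continuous on all of $\overline D$.

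The main obstacle I anticipate is the middle step: deducing from Lemma \ref{lem:1} that the whole closure $\overline{D_r}$, and not merely $D_r$ itself, sits in the interior $D$. This is exactly where strict convexity is used; without it $D_r$ may accumulate on $\bd D$ (as in the semicircle example, where $\bfr$ jumps across the diameter), and the distance $\delta_r$ would degenerate to $0$. Once the compact-in-$D$ property is secured, the remaining arguments are routine topological bookkeeping.
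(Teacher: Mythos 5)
Your proof is correct and takes essentially the same route as the paper: continuity on the interior $D$ comes from Remark \ref{rem:a}, and continuity at boundary points is reduced to the fact, supplied by Lemma \ref{lem:1}, that $\bfr$ is small in a neighbourhood of $\bd D$. The only (harmless) difference is that you derive the boundary estimate from the \emph{statement} of Lemma \ref{lem:1} via the compactness/separation argument $\overline{D_r}\subset D$, whereas the paper simply re-uses the local estimate already established inside that lemma's proof.
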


\begin{proof}
 By Remark \ref{rem:a}, $\bfr$ is continuous on $D$.
 The continuity at $\bd D$ follows from the proof of Lemma \ref{lem:1}.
\end{proof}

Therefore, if $D$ is strictly convex, then
 $$
 I_D=E_R=\bfr^{-1}(R)\, .
 $$
As $I_D\subset D$, we have that $I_D$ does not touch $\bd D$.

\begin{teo} \label{lem:2}
Let $D$ be strictly convex. For all $0<r<R$, $D_r$ is strictly convex.
\end{teo}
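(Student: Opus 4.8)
The plan is to argue by contradiction, using Lemma \ref{prop:important}(2) as the main tool. Recall that by Lemma \ref{lem:1} and the discussion following the Corollary we have $\bd D_r=\bfr^{-1}(r)$ and $\overline{D_r}=E_r=\{\bfr\geq r\}$, while $D_r$ is already known to be convex by Theorem \ref{teo:convex}. Hence strict convexity of $D_r$ is equivalent to $\bd D_r$ containing no segment. So I would suppose that there is a segment $[A,B]\subset\bd D_r$ with $A\neq B$, fix a point $p$ in its relative interior, and aim for a contradiction. Since $0<r<R$, case (1) of Lemma \ref{prop:important} cannot occur (it would force $r=R$), so at $p$ we are in case (2): there is a minimal chord $[P,Q]$ with $\bfr(p)=r=\|\overrightarrow{PQ}\|$, $P,Q\in\bd D$, non-parallel supporting vectors $v_P,v_Q$ spanning a plane $\pi$, a unit vector $v\in\pi$ with $v\perp\overrightarrow{PQ}$ and $E_r\subset\overline{H}{}_v^-$, and, crucially, the strict statement that $\bfr(x)<r$ for $x\in[P,Q]\setminus\{p\}$ close to $p$.

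First I would record two facts. Since $E_r\subset\overline{H}{}_v^-$ and $p\in H_v$, the hyperplane $H_v$ supports the convex set $E_r=\overline{D_r}$ at $p$; as $[A,B]\subset\overline{D_r}$ and $p\in(A,B)$, the affine function $x\mapsto\la x-p,v\ra$ is $\leq 0$ on $[A,B]$ and vanishes at the interior point $p$, hence vanishes identically, so $[A,B]\subset H_v$, i.e.\ $\overrightarrow{AB}\perp v$. Since also $\overrightarrow{PQ}\perp v$, both $[A,B]$ and $[P,Q]$ lie in $H_v$. The second fact is the tension to exploit: $\bfr\equiv r$ along $[A,B]$, whereas $\bfr$ strictly decreases along $[P,Q]$ away from $p$.

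In the plane ($n=2$) this finishes at once: $H_v$ is a line, so $[A,B]$ and $[P,Q]$ are collinear, $\overrightarrow{AB}\parallel\overrightarrow{PQ}$, and moving along $[A,B]$ through $p$ is moving along the minimal chord, where $\bfr$ strictly drops below $r$ --- contradicting $\bfr\equiv r$ on $[A,B]$. For general $n$ the engine I would use is a translation of the minimal chord: if $w$ is a vector with $w\perp v_P$ and $w\perp v_Q$, then $P+\delta w\in H_{v_P}$ and $Q+\delta w\in H_{v_Q}$, and by strict convexity each supporting hyperplane meets $\overline D$ in a single point, so $P+\delta w,\,Q+\delta w\notin\overline D$ for $\delta\neq 0$; the chord of $D$ through $p+\delta w$ parallel to $[P,Q]$ is then strictly shorter than $r$, giving $\bfr(p+\delta w)<r$. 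Hence no point $p+\delta w$ can lie on $\bd D_r$.

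The main obstacle is exactly to align this translation with the face $[A,B]$. Writing $\overrightarrow{AB}=c\,\widehat{PQ}+w$ with $w\in\pi^{\perp}$ (a decomposition that uses only $\overrightarrow{AB}\perp v$), the clean contradiction above applies verbatim when $c=0$, i.e.\ when $\overrightarrow{AB}\perp\pi$. The difficult case is $c\neq 0$, where $\overrightarrow{AB}$ also has a component along the minimal chord. To handle it I would pass to the minimal face $F$ of the convex body $\overline{D_r}$ whose relative interior contains $p$: then $\overrightarrow{AB}\in\mathrm{aff}(F)-p$, while the strict decrease forces $\overrightarrow{PQ}$ transverse to $F$, and I would study the family of minimal chords $[P_q,Q_q]$ as $q$ ranges over $\mathrm{relint}(F)$. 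Using continuity of the contact map $g$ together with the convexity of the tangent cone of $\overline{D_r}$ at $p$, the goal is to show these chords are parallel and that their endpoints sweep out a genuine segment in $\bd D$, contradicting the strict convexity of $D$ itself. Establishing this last step --- that a flat piece of $\bd D_r$ must propagate to a flat piece of $\bd D$ --- is where I expect the real work to lie.
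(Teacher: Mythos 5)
Your setup coincides with the paper's own proof up to and including the step $[A,B]\subset H_v$: rule out case (1) of Lemma \ref{prop:important} because $r<R$, invoke case (2) to get the supporting hyperplane $H_v$ for $E_r$ at $p$, and conclude that the segment must lie in $H_v$. But you then stop short of a proof: the case you call difficult ($\overrightarrow{AB}$ having a nonzero component $c$ along $\overrightarrow{PQ}$ as well as a component $w\in\pi^{\perp}$) is left as an unproven programme about minimal faces and propagation of flatness to $\bd D$, so as written the argument is incomplete. That is a genuine gap, and moreover one that dissolves with a one-line observation you already have all the ingredients for: the chord of $D$ through a point $x$ in the direction $\widehat{PQ}$ depends only on the line $x+\R\,\overrightarrow{PQ}$, hence is unchanged if you add a multiple of $\overrightarrow{PQ}$ to $x$. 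So for $x=p+tc\,\widehat{PQ}+tw$ with $tw\neq 0$, the relevant chord is exactly the one through $p+tw$, which your own translation argument already shows has length $<r$; and if $w=0$ with $c\neq 0$, then $x$ lies on the line through $P,Q$ and the last assertion of Lemma \ref{prop:important}(2) gives $\bfr(x)<r$ directly. Either way every $x\in[A,B]\setminus\{p\}$ near $p$ satisfies $\bfr(x)<r$, contradicting $[A,B]\subset\bd D_r=\bfr^{-1}(r)$.

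The paper closes the argument slightly differently but just as quickly: for \emph{any} $x\in H_v$, the segment through $x$ parallel to $[P,Q]$ with endpoints on $H_{v_P}$ and $H_{v_Q}$ has length exactly $r$ (this is the content of the construction of $v$ in Lemma \ref{prop:important}(2): the length of such parallel segments is an affine function of $x$ whose level set through $p$ at value $r$ is precisely $H_v$), and since $x$ is off the line $PQ$ its endpoints differ from $P$ and $Q$, hence lie outside $\overline D$ by strict convexity, so the actual chord is strictly shorter and $\bfr(x)<r$. I would encourage you to either adopt that observation or add the "chord depends only on $x$ modulo $\overrightarrow{PQ}$" remark; with either one your proof is complete and essentially identical to the paper's. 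The elaborate machinery in your last paragraph (minimal faces of $\overline{D_r}$, continuity of $g$, sweeping families of chords) is not needed.
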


\begin{proof}
Suppose that $\bd D_r$ contains a segment $l$. Let $p$ be a point in the interior of $l$.
As it is $r$-accessible, there is a segment $[P,Q]$ of length $r$ through $p$, where $P,Q\in\bd D$.
By Lemma \ref{prop:important}, $v_P,v_Q$ are not parallel, and all points in $[P,Q]$ different from
$p$ are $r'$-accessible for some $r'<r$. Therefore $l$ is transversal to $[P,Q]$. Let $H_v$ be the
hyperplane produced by Lemma \ref{prop:important} (2). Then all points at one side of $H_v$ are
$r'$-accessible for some $r'<r$, hence $l$ cannot be transversal to $H_v$, so $l\subset H_v$.

Now let $x\in l$, $x\neq p$. Consider the segment parallel to $[P,Q]$ through $x$, call it $\sigma$.
It has length $r$
and endpoints at $H_{v_P}, H_{v_Q}$. But $D$ is strictly convex, so it only touches the supporting
hyperplanes at one point. Hence $\sigma \cap D$ is strictly contained in $\sigma$. Therefore
$\bfr(x)<r$. Contradiction.
\end{proof}

%%%%%%%%%%%%%%%%%%%%%%%%%%%%%%%%%%%%%%%%%%%%%%%%%%%%%%%%%%%%%%%%%%%%%%%%%%%%
\section{Set of maximum inaccessibility} \label{sec:D-R}
%%%%%%%%%%%%%%%%%%%%%%%%%%%%%%%%%%%%%%%%%%%%%%%%%%%%%%%%%%%%%%%%%%%%%%%%%%%%

In this section we suppose that $D$ is convex. Then $\bfr$ is continuous on
$D$ and it achieves its maximum $R$ on $D$. Then $I_D =E_R$ and
$I_D\cap D=E_R\cap D=\bfr^{-1}(R)$, by Proposition \ref{lem:supremum}.

We want to characterize the case where $I_D$ contains interior.
Let us see an example where this situation happens. Let $D$ be a rectangle.
In this case $R$ is the length of the shortest edge of the rectangle, and we have
an open set with $\bfr (p)=R$ (see Figure \ref{fig:rectangle}). Note that
it might happen that $\bd E_R$ intersects $\bd D$.

\begin{prop}\label{prop:interior}
  If $I_D$ has non-empty interior, then $\bd D$ contains two open subsets
  which are included in parallel hyperplanes, which are at distance $R$.
\end{prop}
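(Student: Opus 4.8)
The plan is to work with a point $p$ in the interior of $I_D$ and exploit the rigidity from Lemma \ref{prop:important}. Since $p$ is an interior point of $I_D = \bfr^{-1}(R)$ (using Proposition \ref{lem:supremum}), a whole ball $B_\epsilon(p)$ satisfies $\bfr(x)=R$ for all $x\in B_\epsilon(p)$. Pick a minimizing segment $[P,Q]$ through $p$ of length $R$ with $P,Q\in\bd D$ and supporting vectors $v_P,v_Q$. The dichotomy in Lemma \ref{prop:important} is the engine: if $v_P,v_Q$ were \emph{not} parallel, then part (2) produces a hyperplane $H_v=\{\la x-p,v\ra=0\}$ with $E_R\subset\overline H{}_v^-$, and moreover $\bfr(x)<R$ for points on $[P,Q]$ near $p$ on at least one side. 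But $p$ lies in the open set $B_\epsilon(p)\subset E_R$, so $E_R$ cannot sit entirely on one side of a hyperplane through $p$ — $p$ is interior to $E_R$, contradicting $E_R\subset\overline H{}_v^-$. Hence I must be in case (1): $v_P=-v_Q$, both $P,Q$ are non-corner points, $\overrightarrow{PQ}\parallel v_P$, and the two supporting hyperplanes $H_{v_P},H_{v_Q}$ are parallel.

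First I would establish that the parallel case holds at the point $p$ itself, giving one pair of parallel supporting hyperplanes at distance $R$ (the distance is $\|\overrightarrow{PQ}\|=R$ since $\overrightarrow{PQ}\parallel v_P$ realizes the gap between the parallel hyperplanes). Next, to promote this from a single pair of \emph{points} $P,Q$ to \emph{open subsets} of $\bd D$, I would let $p$ range over the ball $B_\epsilon(p)$. For each $x\in B_\epsilon(p)$ the same argument applies: $x$ is interior to $E_R$, so any minimizing segment through $x$ must fall into case (1), producing endpoints $P_x,Q_x\in\bd D$ that are non-corner with antipodal supporting vectors $v_{P_x}=-v_{Q_x}$ and $\overrightarrow{P_xQ_x}\parallel v_{P_x}$. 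Because $D$ is confined between the two parallel hyperplanes $H_{v_P}$ and $H_{v_Q}$ (this is exactly how case (1) is proved, yielding $R=r$), and these hyperplanes are at distance $R$, any segment of length $R$ with endpoints on $\bd D$ and crossing the interior must run from one hyperplane to the other, meeting $\bd D$ precisely in $H_{v_P}\cap\bd D$ and $H_{v_Q}\cap\bd D$. Thus the endpoints $P_x,Q_x$ trace out subsets of $\bd D\cap H_{v_P}$ and $\bd D\cap H_{v_Q}$.

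To see these traced-out sets are genuinely \emph{open} in $\bd D$, I would argue that as $x$ sweeps the open ball $B_\epsilon(p)$ transversally to the common direction $v_P$, the endpoints $P_x$ move continuously (the endpoint map is continuous by the compactness arguments of Lemma \ref{lem:semi-cont-1}) and cover a full relatively-open neighbourhood within $\bd D\cap H_{v_P}$; since all these points lie in the hyperplane $H_{v_P}$, the set $\bd D\cap H_{v_P}$ contains a relatively open piece, and symmetrically for $H_{v_Q}$. These two pieces lie in parallel hyperplanes at distance $R$, which is exactly the conclusion.

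The main obstacle I anticipate is the promotion step: carefully showing that the endpoints $P_x$ fill out an honest open subset of $\bd D$ rather than, say, a single point or a lower-dimensional sliver. The delicate part is controlling how the minimizing direction can vary with $x$ — a priori a different $x$ might have a minimizing segment in a different parallel direction. I would handle this by noting that case (1) forces $D$ to lie between $H_{v_P}$ and $H_{v_Q}$ with $R$ equal to their separation, so the \emph{minimal width} direction of $D$ is pinned down; any minimizing segment of length $R$ must be perpendicular to these hyperplanes and therefore share the direction $v_P$. This rigidity keeps the direction constant as $x$ varies, so the endpoints really do sweep out two-dimensional (in general, $(n-1)$-dimensional) open patches in the two fixed parallel hyperplanes.
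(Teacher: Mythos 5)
Your first step is exactly the paper's: rule out case (2) of Lemma \ref{prop:important} because an interior point of $E_R$ cannot have all of $E_R$ on one side of a hyperplane through it, and conclude that $v_P=-v_Q$, $\overrightarrow{PQ}\parallel v_P$, and the two supporting hyperplanes are parallel at distance $R$. That part is correct. The gap is in your promotion step, and it sits precisely where you yourself flagged the difficulty. Your resolution rests on the claim that case (1) pins down the width-minimizing direction of $D$, so that \emph{every} minimizing segment of length $R$ through a nearby point $x$ must be perpendicular to $H_{v_P}$ and $H_{v_Q}$. This is false: for a square of side $R$, case (1) holds for the horizontal pair of sides and also for the vertical pair, and a point near the centre has minimizing segments in both directions. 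So the endpoints $P_x,Q_x$ of an arbitrarily chosen minimizing segment through $x$ need not lie in $H_{v_P}\cup H_{v_Q}$ at all, and the set they trace in $H_{v_P}$ could degenerate. The auxiliary claim that any length-$R$ segment with endpoints on $\bd D$ crossing the interior ``must run from one hyperplane to the other'' fails for the same example (a horizontal chord of the square versus the horizontal pair of supporting lines).

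The paper avoids this entirely by not using minimizing segments for the nearby points: for $x$ close to $p$ it takes the segment $[P',Q']$ through $x$ \emph{parallel to} $[P,Q]$, with endpoints on $H_{v_P}$ and $H_{v_Q}$. Since the hyperplanes are at distance $R$ and the direction is $v_P$, this segment has length exactly $R$; since $D$ lies in the slab, the connected component of $(x+\R v_P)\cap D$ containing $x$ is contained in $[P',Q']$, and if it were a proper subsegment one would get $\bfr(x)<R$, contradicting $x\in E_R$. Hence $P'\in\bd D\cap H_{v_P}$ and $Q'\in\bd D\cap H_{v_Q}$. As $x$ ranges over $B_\epsilon(p)$, the points $P'$ and $Q'$ are just the orthogonal projections of $x$ onto the two hyperplanes, which sweep out relatively open neighbourhoods of $P$ and $Q$ there; no continuity argument for an endpoint map is needed. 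If you replace your ``rigidity of the minimizing direction'' argument by this parallel-segment argument, your proof closes up and coincides with the paper's.
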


\begin{proof}
Consider an interior point $p\in I_D$, so $\bfr(p)=R$. Take
a segment $l=[P,Q]$ of length $R$ with endpoints $P,Q\in \bd D$.
Let $v_P,v_Q$ be vectors orthogonal to the supporting hyperplanes at $P,Q$. By Lemma
\ref{prop:important}, if they are not parallel, then there is a hyperplane through $p$ such that
$E_R$ is contained in one (closed) half-space. This is not possible, as $p$ is an interior point
of $E_R$. So $v_P,v_Q$ are parallel, and $\overrightarrow{PQ} \parallel v_P$.
Now take any point $x$ close to $p$, and consider the segment
$[P',Q']$ through $x$ parallel to $[P,Q]$, which has endpoints in $H_{v_P},H_{v_Q}$.
If $[P',Q']\cap D$ is properly contained in $[P',Q']$, then $\bfr(x)<R$, which contradicts
that $x\in E_R$. So $P'\in H_{v_P}$, $Q'\in H_{v_Q}$, and
$\bd D$ contains two open subsets in $H_{v_P}, H_{v_Q}$ around $P,Q$, respectively.
\end{proof}

\begin{teo} \label{teo:main-DR}
Let $D$ be a strictly convex bounded domain, $R=\max \bfr$. Then
 $I_D$ is a point or a segment. %does not contain two distinct segments.
\end{teo}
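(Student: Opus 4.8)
The plan is to show that $I_D = E_R = \bfr^{-1}(R)$ is a convex set (already known from Theorem \ref{teo:convex}) with empty interior, and then to rule out the possibility that it is a higher-dimensional flat piece, leaving only a point or a segment. First I would invoke Proposition \ref{prop:interior}: if $I_D$ had non-empty interior, then $\bd D$ would contain two open subsets lying in parallel hyperplanes. But $D$ is strictly convex, so $\bd D$ contains no segment, let alone a flat open piece; this contradiction shows $I_D$ has empty interior. Hence $I_D$ is a convex set of dimension at most $n-1$ sitting inside $D$.

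The heart of the matter is to upgrade ``empty interior'' all the way down to ``dimension at most one.'' My approach is to suppose, for contradiction, that $I_D$ contains two linearly independent directions, i.e. that it contains a nondegenerate planar triangle (equivalently, an interior point $p$ relative to a $2$-dimensional affine subspace $\Pi$). Pick such an interior point $p$ of the $2$-face and a minimizing segment $[P,Q]$ of length $R$ through $p$ with $P,Q\in\bd D$. By strict convexity and Lemma \ref{prop:important}, the supporting vectors $v_P,v_Q$ at $P,Q$ cannot be parallel (parallel supporting vectors would force $P,Q$ to be non-corner points lying in parallel supporting hyperplanes, and then the argument in Proposition \ref{prop:interior} would produce flat pieces on $\bd D$, impossible). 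Therefore we are in case (2) of Lemma \ref{prop:important}, which hands us a hyperplane $H_v=\{\la x-p,v\ra=0\}$ with $E_R\subset \overline{H}{}_v^-$.

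Now I would extract the contradiction from this supporting hyperplane. Since $p$ is a relative interior point of the $2$-dimensional piece of $I_D=E_R$ contained in $\Pi$, the set $E_R$ contains points of $\Pi$ strictly on both sides of any line through $p$ within $\Pi$; in particular it contains points with $\la x-p,v\ra>0$ unless $\Pi\subset H_v$. So we must have $\Pi\subset H_v$, meaning $v\perp\Pi$. But Lemma \ref{prop:important}(2) also asserts $\bfr(x)<r=R$ for points $x\in[P,Q]$ close to $p$ and distinct from $p$; such points lie arbitrarily close to $p$ and hence, if the $2$-face were genuinely two-dimensional, they would have to be compared against nearby points of $E_R$ in $\Pi$. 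More directly: the same construction as in Theorem \ref{lem:2} shows that moving $x$ along $I_D$ away from $p$ in a direction transverse to $[P,Q]$ forces the parallel segment $\sigma$ through $x$ of length $R$ to have endpoints on the supporting hyperplanes $H_{v_P},H_{v_Q}$, and by strict convexity $\sigma\cap D$ is strictly shorter than $\sigma$, giving $\bfr(x)<R$, a contradiction. Thus $I_D$ cannot contain two independent directions.

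The main obstacle is the second step: ``empty interior'' only rules out full dimension, and I must genuinely exclude every intermediate dimension $2,\dots,n-1$, not merely dimension $n$. The clean way to handle this uniformly is the reduction in the previous paragraph: assume $I_D$ contains a nondegenerate planar region and derive a contradiction exactly as in the proof of Theorem \ref{lem:2}, where strict convexity forces any parallel segment through a second point of $I_D$ to be properly contained in $D$. Once a $2$-dimensional piece is excluded, every higher-dimensional piece is excluded a fortiori, so $I_D$ is a convex set of dimension at most one, i.e. a point or a segment.
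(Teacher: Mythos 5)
Your argument is essentially correct and takes the same route as the paper: both rest on the dichotomy of Lemma \ref{prop:important} for a length-$R$ chord $[P,Q]$ through a well-chosen point $p$ of $I_D$, plus the fact that strict convexity gives $\overline{D}\cap H_{v_P}=\{P\}$ and $\overline{D}\cap H_{v_Q}=\{Q\}$, so that any parallel chord through another point of $E_R$ is properly and compactly contained in the corresponding chord of the slab and hence has length $<R$. One step is misstated, though: ``by strict convexity $v_P,v_Q$ cannot be parallel'' is false as a blanket claim --- for an ellipse the supporting lines at the endpoints of the minor axis are parallel, and that is exactly the situation in which $I_D$ is a genuine segment, so any argument that eliminates the parallel case outright would ``prove'' too much. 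The parallel case is excluded only under your standing hypothesis that $p$ is a relative interior point of a $2$-face, and not by quoting Proposition \ref{prop:interior}, whose argument requires a full-dimensional neighbourhood of $p$ inside $E_R$ to produce open flat pieces of $\bd D$; the correct reason is that, with $\overrightarrow{PQ}\parallel v_P$ and $\overline{D}\cap H_{v_P}=\{P\}$, the parallel chord through any point of $E_R$ must have its endpoints at $P$ and $Q$, so $E_R\subset[P,Q]$ (the paper's case (1)), which already collapses the alleged $2$-face to a subset of a segment. With that repair both cases yield $E_R\subset[P,Q]$, exactly as in the paper. Your opening step via Proposition \ref{prop:interior} is correct but redundant, since excluding $2$-dimensional faces excludes nonempty interior a fortiori.
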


\begin{proof}
 Suppose that $I_D$ is not a point. As it is convex by Theorem \ref{teo:convex},
 it contains a maximal segment $\sigma$. Let us see that it cannot contain two different
 (intersecting) segments. Let $p\in \sigma$ be an interior point of the segment.
 By Lemma \ref{prop:important}, if we draw the segment $[P,Q]$ of length
 $R$ through $p$, we have the following possibilities:
 \begin{itemize}
  \item $v_P,v_Q$ are parallel. Then $\overrightarrow{PQ} \parallel v_P$. Then any point
  $x\not\in [P,Q]$ lies in a segment $[P',Q']$ parallel to $[P,Q]$, with $P'\in H_{v_P}$
   and $Q'\in H_{v_Q}$. By strict convexity, $l([P',Q']\cap D)<R$, so $\bfr(x)<R$. That is, $E_R\subset [P,Q]$.
  \item $v_P,v_Q$ are non-parallel. Then there is a hyperplane $H_v$ through $p$ such that
  $E_R\subset \overline{H}{}_v^-$. As $p$ is an interior point of $\sigma$, $\sigma$ does not cross $H_v$,
  so $\sigma \subset H_v$. Now let $x\in \sigma$, and consider the segment $[P',Q']$ parallel to $[P,Q]$
  through $x$, with length $R$, $P'\in H_{v_P}$
   and $Q'\in H_{v_Q}$. If $x\not\in [P,Q]$ then
   strict convexity gives $l([P',Q']\cap D)<R$, so $\bfr(x)<R$.
   That is, $E_R\subset [P,Q]$. Note that Lemma \ref{prop:important} (2) gives in this case that $E_R=\{p\}$.
 \end{itemize}
\end{proof}

Let us see an example where $E_R$ is a segment.
Let $D$ be the ellipse with equation $\frac{x^2}{a^2} + \frac{y^2}{b^2} <1$, where $2a>2b$.
Then $R=2b$
and $E_R$ is a segment contained in the short axis, delimited by the intersection of the axis with
the perpendicular segments of length $R$ with endpoints in the ellipse.

\begin{figure}[h] %{Flow-box}
\centering
\resizebox{6cm}{!}{\includegraphics{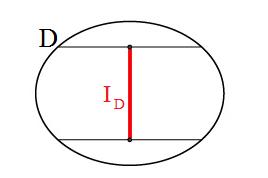}}    % name of the file - without extension
\caption{Ellipse}
\end{figure}

All points $(x,y)\in D$
 with $x\neq 0$ can be reached by vertical segments of length $<R=2b$. Now let
 $x_0= b$, $y_0=b\sqrt{a^2-b^2}/a$. If $y\in (-b,-y_0)\cup (y_0,b)$ then the point $(0,y)$ is
 $r$-accessible (with a horizontal segment) with $r<R$.
Now let $y\in [-y_0,y_0]$, and consider a line through $(0,y)$. Let us parametrize it as
 $$
 r(s)=(s\, a\, \cos \theta, y + s\, b\, \sin \theta)\, ,
 $$
with $\theta$ fixed.
The intersection with the ellipse are given by
$s=-\frac{y}{b} \sin\theta\pm\sqrt{1- \frac{y^2}{b^2}\cos^2 \theta}$.
So the square of the distance between the two points is
 \begin{eqnarray*}
 l(\theta)^2 &=&4(1- \frac{y^2}{b^2}\cos^2\theta) (a^2\cos^2\theta+b^2\sin^2\theta) \\
  &=& 4(1-\frac{y^2}{b^2} T) ((a^2-b^2)T +b^2) \, ,
 \end{eqnarray*}
where $T= \cos^2\theta \in [0,1]$. The minimum of this degree $2$ expression on $T$
happens for a negative value of $T$, therefore, we only need to check the values $T=0,1$.
For $T=0$, we get $4b^2$; for $T=1$, we get $4(1- \frac{y^2}{b^2})a^2
\geq 4(1- \frac{y_0^2}{b^2})a^2=4b^2$. So $l(\theta)^2\geq 4b^2$.

\medskip

A consequence of Theorem \ref{teo:main-DR}
is the following: for a strictly convex bounded domain $D$, if
  $I_D$ is not a point then there are two non-corner points $P,Q\in \bd D$ with
  parallel tangent hyperplanes which are moreover perpendicular to $\overrightarrow{PQ}$.

\begin{cor} \label{cor:main-DR-cor}
Suppose $D$ is a \emph{planar} convex bounded domain (not necessarily strictly convex). If
  $I_D$ is not a point then there are two non-corner points $P,Q\in \bd D$ with
  parallel tangent hyperplanes which are moreover perpendicular to $\overrightarrow{PQ}$.
\end{cor}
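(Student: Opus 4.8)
The plan is to read Lemma \ref{prop:important} in the plane (where the spanning plane $\pi$ is all of $\R^2$) and to reduce the statement to its part (1). By Theorem \ref{teo:convex} the set $I_D=E_R$ is convex, and a planar convex set that is not a single point is either a segment or has nonempty interior, so I split into these two cases. Since $\bfr$ attains $R$ at some $p_0\in D$ (Proposition \ref{lem:supremum}) and $I_D$ is not a point, $I_D$ contains a nondegenerate segment $\sigma$ through $p_0$; as $D$ is open, $\sigma\cap D$ is a nonempty relatively open sub-segment on which $\bfr\equiv R$, because $\sigma\cap D\subset I_D\cap D=\bfr^{-1}(R)$.

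If $I_D$ has nonempty interior, Proposition \ref{prop:interior} already produces two open arcs of $\bd D$ lying in parallel lines $H_{v_P},H_{v_Q}$ at distance $R$, with the connecting segment satisfying $\overrightarrow{PQ}\parallel v_P$. Choosing $P,Q$ in the relative interiors of these flat arcs makes them non-corner (an interior point of a boundary segment has a unique supporting line, namely the line carrying the segment); their tangent lines are then $H_{v_P}\parallel H_{v_Q}$, and $\overrightarrow{PQ}\parallel v_P\perp H_{v_P}$ gives the asserted perpendicularity. This is the whole configuration required.

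The substantive case is $I_D=\sigma$ a genuine segment, where the strict-convexity step that eliminated this possibility in Theorem \ref{teo:main-DR} is no longer available. I fix a relative-interior point $p\in\sigma\cap D$ and a minimizing chord $[P,Q]$ of length $R$ through $p$, and apply Lemma \ref{prop:important}. If $v_P,v_Q$ are parallel, part (1) delivers exactly the claim ($P,Q$ non-corner, $\overrightarrow{PQ}\parallel v_P$). So I argue by contradiction that the non-parallel alternative cannot hold throughout: assume that for every $x\in\sigma\cap D$ the minimizing chord $[P_x,Q_x]$ has non-parallel supporting vectors. For each such $x$, part (2) produces a line $H_{v_x}\ni x$ with $v_x\perp\overrightarrow{P_xQ_x}$ and $E_R=\sigma\subset\overline{H}{}_{v_x}^-$; since $x$ is a relative-interior point of $\sigma$, this forces $\sigma\subset H_{v_x}$, so $H_{v_x}$ is the common line carrying $\sigma$ and every $[P_x,Q_x]$ is perpendicular to $\sigma$, hence parallel to one fixed direction $u$. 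As these are also the full $D$-chords in direction $u$ (convexity) and all have length $R$, the extent of $\overline D$ in direction $u$ is constant equal to $R$ over an open interval of the transverse coordinate.

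The key step is then purely convex-geometric: writing $\overline D$ locally as $\{(s,y):h_1(s)\le y\le h_2(s)\}$ with $u$ vertical, convexity makes $h_2$ concave and $h_1$ convex, while $h_2-h_1=R$ forces both to be affine on that interval. Thus the two endpoint arcs $\{(s,h_1(s))\}$ and $\{(s,h_2(s))\}$ are straight parallel segments, and at their relative-interior points $P_x,Q_x$ the (now unique) supporting vectors are parallel — contradicting the standing non-parallel assumption. Hence some $x\in\sigma\cap D$ has a minimizing chord with parallel supporting vectors, and Lemma \ref{prop:important}(1) produces the required non-corner points. I expect the main obstacle to be exactly this affineness argument together with the bookkeeping needed to guarantee that the relevant endpoints lie in the relative interiors of the flat arcs, so that they are genuinely non-corner; everything else is a direct transcription of Lemma \ref{prop:important} to the plane.
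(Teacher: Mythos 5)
Your overall strategy (reduce to Lemma \ref{prop:important}(1) by ruling out the non-parallel alternative) is the right one, and your treatment of the case where $I_D$ has interior via Proposition \ref{prop:interior} is fine. But the ``substantive case'' contains a genuine error that derails the argument.

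In Lemma \ref{prop:important}(2) the set $H_v=\{\la x-p,v\ra=0\}$ is the line through $p$ with \emph{normal} $v$; since $v\perp\overrightarrow{PQ}$, this line is \emph{parallel} to $\overrightarrow{PQ}$, hence it is exactly the line through $P$ and $Q$, and $H_v\cap\overline D=[P,Q]$. So when you deduce $\sigma\subset H_{v_x}$ from the fact that $x$ is a relative-interior point of $\sigma$, the correct conclusion is that $[P_x,Q_x]$ is \emph{collinear} with $\sigma$ (indeed $\sigma\subset[P_x,Q_x]$), not perpendicular to it. You have read $H_v$ as the line spanned by $v$ rather than the line orthogonal to $v$. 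As a result there is no family of parallel chords of length $R$ in a direction transverse to $\sigma$ sweeping out an open interval, and the entire ``width $\equiv R$ forces $h_1,h_2$ affine'' argument addresses a configuration that never arises; nothing in your text closes the actual case.

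Ironically, once you know $\sigma\subset[P_x,Q_x]$ the contradiction is immediate from the final clause of Lemma \ref{prop:important}(2), which you quote in your setup but never invoke: $\bfr(y)<R$ for $y\in[P_x,Q_x]\setminus\{x\}$ close to $x$, while $\bfr\equiv R$ on $\sigma\cap D$ near the relative-interior point $x$. This is precisely the paper's one-line proof: in the plane $\sigma\subset H_v\cap\overline D=[P,Q]$ and $E_R\cap[P,Q]=\{p\}$ near $p$, so case (2) is impossible and case (1) of the lemma delivers the two non-corner points with parallel supporting lines perpendicular to $\overrightarrow{PQ}$. (The paper also does not need your dichotomy between ``segment'' and ``nonempty interior'': any maximal segment in $I_D$ with a relative-interior point in $D$ suffices in both situations.)
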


\begin{proof}
Following the proof of Theorem \ref{teo:main-DR}, we only have to rule out case (2).
As the hyperplane $H_v$ is now of dimension $1$, we have $\sigma\subset [P,Q]=H_v\cap \overline{D}$.
But Lemma \ref{prop:important} says also that $E_R\cap [P,Q]=\{p\}$. So $E_R$ does not contain
a segment, i.e. it is a point.
\end{proof}

So, for a convex polygon $D$, if it does not have parallel sides, then $I_D$ is a point.

Corollary \ref{cor:main-DR-cor} is not true in dimension $\geq 3$. Take a triangle $T\subset \R^2$
and consider $D=T\times [0,L]$ for large $L$. For $T$, denote $I_T=\{p\}$. Then
$D$ has $I_D=\{p\}\times [a,b]$, for some $0<a<b<L$. Certainly, there are two parallel faces (base and
top), but we slightly move one of them to make them non-parallel, and $I_D$ is still a segment.

\begin{figure}[h] %{Flow-box}
%\centering
\resizebox{8cm}{!}{\includegraphics{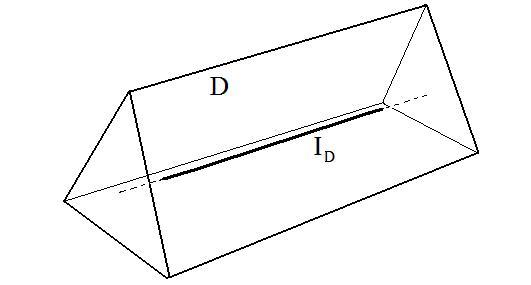}}    % name of the file - without extension
\caption{$I_D$ can be positive dimensional}
\end{figure}

One can make this construction to have $I_D$ of higher dimension (not just a segment),
e.g. by considering $T\times [0,L]^N$, $N>1$.

%%%%%%%%%%%%%%%%%%%%%%%%%%%%%%%%%%%%%%%%%%%%%%%%%%%%%%%%%%%%%%%%%%%%%%%%%%%%
\section{Polygons} \label{sec:triangles}
%%%%%%%%%%%%%%%%%%%%%%%%%%%%%%%%%%%%%%%%%%%%%%%%%%%%%%%%%%%%%%%%%%%%%%%%%%%%

In this section we want to study in detail the case of convex polygons in the
plane, and to give some answers in the case of triangles.
The starting point is the case of a sector.

\begin{lem} \label{lem:sector}
  Fix $\lambda\in \R$.
  Let $D$ be the domain with boundary the half-lines $(x,0)$, $x\geq 0$ and
  $(\lambda y, y)$, $y\geq 0$. Let $r>0$.
  Then the boundary of $D_r$ is the curve:
 \begin{equation}\label{eqn:curve}
 \left\{ \begin{array}{l}
 x=r(\cos^3\theta + \lambda (\sin^3 \theta+ 2 \sin\theta\cos^2\theta)) \\
 y=r(\sin^3\theta - \lambda\sin^2\theta\cos\theta)
 \end{array}\right.
 \end{equation}
\end{lem}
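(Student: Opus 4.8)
The plan is to use the envelope characterization of $\bd D_r$ established in Proposition \ref{prop:envelope}: since $D$ is convex and planar, $\bd D_r$ is the envelope of the one-parameter family of chords of length $r$ whose endpoints lie on the two half-lines bounding $D$. The key to a clean computation is to choose a parametrization in which the length constraint is built in from the start, so that I deal with an ordinary envelope rather than a constrained one. I would let $\alpha$ be the angle the chord makes with the $x$-axis, write its endpoint on the $x$-axis as $A=(a,0)$ and its direction as $(\cos\alpha,\sin\alpha)$, so that the other endpoint is $B=A+r(\cos\alpha,\sin\alpha)$. Imposing $B=(\lambda q,q)$ forces $q=r\sin\alpha$ and $a=r(\lambda\sin\alpha-\cos\alpha)$, so each chord lies on the line through $A$ with normal $(\sin\alpha,-\cos\alpha)$, giving the family
\[
F(x,y,\alpha)=x\sin\alpha-y\cos\alpha-r\lambda\sin^2\alpha+r\sin\alpha\cos\alpha=0 .
\]

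Next I would compute the envelope in the standard way, solving $F=0$ together with $\partial F/\partial\alpha=0$ for $(x,y)$. The second condition is
\[
x\cos\alpha+y\sin\alpha=2r\lambda\sin\alpha\cos\alpha-r(\cos^2\alpha-\sin^2\alpha),
\]
so $(x,y)$ satisfies a $2\times2$ linear system whose coefficient matrix $\left(\begin{smallmatrix}\sin\alpha & -\cos\alpha\\ \cos\alpha & \sin\alpha\end{smallmatrix}\right)$ is a rotation; it has determinant $1$ and is inverted at a glance. Multiplying through by the inverse and collecting terms (the mixed contributions $\pm r\sin^2\alpha\cos\alpha$ and $\pm r\sin\alpha\cos^2\alpha$ cancel in pairs) yields
\[
x=-r\cos^3\alpha+r\lambda(\sin^3\alpha+2\sin\alpha\cos^2\alpha),\qquad y=r\sin^3\alpha+r\lambda\sin^2\alpha\cos\alpha .
\]

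Finally I would reparametrize by $\alpha=\pi-\theta$, which fixes $\sin\alpha=\sin\theta$ while sending $\cos\alpha$ to $-\cos\theta$; this turns $-r\cos^3\alpha$ into $+r\cos^3\theta$ and reverses the sign of the $\lambda\sin^2\theta\cos\theta$ term, reproducing exactly (\ref{eqn:curve}). The calculation is routine once the family $F$ is set up, so I expect the only points requiring care to be (i) arranging the parametrization so that the constraint $|AB|=r$ is automatic, which is what makes the envelope unconstrained and the coefficient matrix a rotation, and (ii) the sign bookkeeping at the end, i.e.\ recognizing that the geometric parameter $\theta$ in the statement is the supplement of the direction angle $\alpha$ used here. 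I would also remark that, since Proposition \ref{prop:envelope} describes $\bd D_r$ through its smooth points, one should note that (\ref{eqn:curve}) traces a single smooth branch which is the part actually bounding $D_r$ inside the sector.
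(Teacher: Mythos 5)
Your proposal is correct and follows essentially the same route as the paper: both invoke Proposition \ref{prop:envelope}, parametrize the chords of length $r$ by the angle they make with the axis so that the length constraint is automatic, and compute the envelope by solving $F=0$, $\partial F/\partial\theta=0$ as a $2\times 2$ linear system with rotation coefficient matrix. The only difference is your choice of direction angle $\alpha=\pi-\theta$ versus the paper's $\theta$, a sign convention you handle correctly at the end.
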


\begin{proof}
$D$ is not a bounded domain, but the theory works as well in this case.
To find the boundary of $D_r$, we need to take the envelope of the segments of
length $r$ with endpoints laying on the half-rays, according to Proposition
\ref{prop:envelope}. Two points at $(a,0)$ and $(\lambda b,b)$ are at distance $r$
if
 $$
 (\lambda b-a)^2+b^2=r^2 \,.
 $$
So $\lambda b-a=-r\cos\theta$,  $b=r\sin\theta$, i.e. $a=\lambda r \sin\theta + r \cos \theta$.
The line which passes through $(\lambda b, b)$ and $(a,0)$ is
 $$
  r\sin\theta \ x + r\cos\theta \ y = r^2 \sin\theta\cos\theta + r^2\lambda\sin^2\theta\, .
 $$
We are going to calculate the envelope of these lines. Take the derivative and solve the system:
$$
 \left\{ \begin{array}{l}
 r\sin\theta \ x+r\cos\theta \ y = r^2\sin\theta\cos\theta + r^2\lambda\sin^2\theta \\
 r\cos\theta \ x-r\sin\theta \ y = -r^2\sin^2\theta+r^2\cos^2\theta + 2r^2\lambda\sin\theta\cos\theta\, .
 \end{array}\right.
 $$
We easily get the expression in the statement.
The region $D_r$ is the unbounded region with boundary the curve (\ref{eqn:curve})
and the two half-rays.
\end{proof}

We call the curve in Lemma \ref{lem:sector} a {\em $\lambda$-bow} (or just a bow). Let $\lambda=\cot \alpha$,
$\alpha \in (0,\pi)$. If $\lambda <0$, we are dealing with an obtuse angle, and $\theta \in [0,\pi-\alpha]$.
If $\lambda =0$, we have a right angle, and $\theta \in [0,
\frac{\pi}{2}]$. Finally, an acute angle happens for $\lambda >0$. In this case, $\theta \in [\frac{\pi}{2} -\alpha,
\frac{\pi}{2}]$. (Note that $\theta$ is the angle between the segment and the negative horizontal axis,
in the proof of Lemma \ref{lem:sector}.)

\begin{figure}[h] %{Flow-box}
%\centering
\resizebox{5cm}{!}{\includegraphics{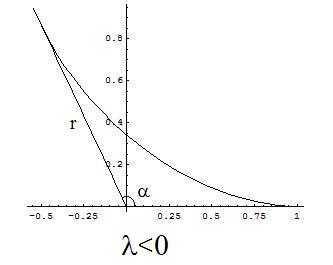}}    % name of the file - without extension
\resizebox{5cm}{!}{\includegraphics{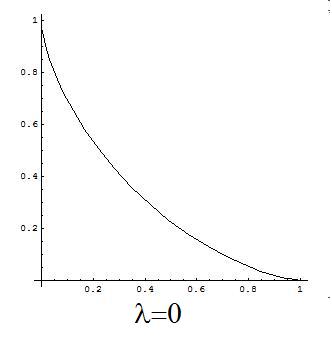}}    % name of the file - without extension
\resizebox{5cm}{!}{\includegraphics{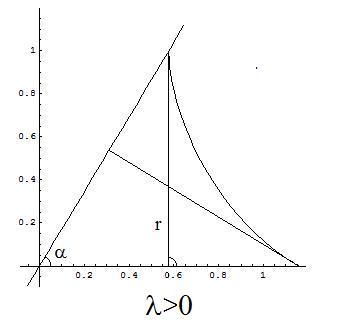}}    % name of the file - without extension
\caption{$\lambda$-bows with $r=1$}
\end{figure}

As an application, we prove the following:

\begin{cor}
  Let $D$ be a planar convex polygon. Then the sets $D_r$, $0<r<R$, and $I_D$ if it is not a point,
  have boundaries
  which are piecewise $C^1$, and whose pieces are $\lambda$-bows %curves as given in Lemma \ref{lem:sector}
  and (possibly) segments in the sides of $\bd D$.
  In particular, these domains are strictly convex when $\bd D_r$ does not intersect $\bd D$.
\end{cor}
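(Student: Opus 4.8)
The plan is to decompose the boundary $\bd D_r$ into the part lying in the open domain $D$ and the part lying on $\bd D$, and to show that the former consists of $\lambda$-bow arcs while the latter consists of straight segments inside the sides of the polygon. For the interior part, I would invoke Proposition \ref{prop:envelope}: every smooth point $p\in \bd D_r\cap D$ carries a length-$r$ chord $[P,Q]$, with $P,Q\in\bd D$, tangent to $\bd D_r$ at $p$. Since $\bd D$ is a finite union of sides (segments) and vertices, I would follow the chord through a continuous family and record the two sides $s_i\ni P$ and $s_j\ni Q$ on which the endpoints slide. While both endpoints stay in the interiors of fixed sides $s_i,s_j$, they lie on two fixed lines $\ell_i\supset s_i$, $\ell_j\supset s_j$; applying the rigid motion that carries $\ell_i\cap\ell_j$ to the origin and $\ell_i$ to the horizontal axis reduces this family to exactly the configuration of Lemma \ref{lem:sector}, whose envelope is the $\lambda$-bow with $\lambda=\cot\alpha$, where $\alpha$ is the angle between $\ell_i$ and $\ell_j$. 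Hence each such sub-arc of $\bd D_r$ is a $\lambda$-bow.

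Next I would establish finiteness and the transition structure. An endpoint can coincide with a vertex $V$ only at isolated parameters: fixing $P=V$ and $\|\overrightarrow{PQ}\|=r$ forces $Q$ onto the circle of radius $r$ about $V$, which meets any fixed side-line in at most two points, so no continuous sub-family can keep an endpoint pinned at a vertex. Thus the vertices of the polygon produce only the finitely many transition points at which one bow arc meets the next, and since there are finitely many ordered pairs of sides, $\bd D_r\cap D$ is a finite concatenation of $\lambda$-bow arcs, each $C^1$ (indeed smooth, from the trigonometric parametrization (\ref{eqn:curve})).

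For the boundary part, I would use convexity directly: $\bd D_r\cap\bd D$ is contained in the union of the sides of the polygon, and since $D_r$ is convex by Theorem \ref{teo:convex}, its boundary meets each side-line in an interval; hence each piece of $\bd D_r$ lying on $\bd D$ is a segment inside a side. Combining the two parts yields the claimed piecewise-$C^1$ description, with pieces that are $\lambda$-bows and (possibly) segments in the sides. The final assertion then follows because a $\lambda$-bow contains no line segment --- which I would verify from (\ref{eqn:curve}) by checking that its curvature does not vanish on an interval --- so if $\bd D_r$ does not meet $\bd D$ there are no straight pieces, the whole boundary is a union of strictly convex arcs joined at points, and $D_r$ is strictly convex. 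For $I_D=E_R$ when it is not a point, I would run the same argument on $E_R=\overline{\bigcap_{\epsilon>0} D_{R-\epsilon}}$, letting $r\to R$ so that the bounding arcs become $R$-bows.

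The step I expect to be the main obstacle is making the reduction to Lemma \ref{lem:sector} rigorous across the whole family: one must ensure that the minimizing chord varies continuously with $p$ and that its endpoints genuinely trace side-interiors away from the isolated vertex-transitions, so that the local envelope computation of the lemma patches into the global convex boundary $\bd D_r$ without spurious or missing arcs. Here the convexity results of Section \ref{sec:convex} --- in particular that $\bd D_r\cap D=\bfr^{-1}(r)$ and that $\bfr^{-1}(r)$ has empty interior --- do the real work of pinning down the boundary.
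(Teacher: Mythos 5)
Your route is genuinely different from the paper's. You work locally on $\bd D_r$: take the tangent length-$r$ chord at each smooth boundary point, track which two sides its endpoints lie on, and match each maximal sub-arc to the envelope of Lemma \ref{lem:sector}. The paper instead proves a single global identity: letting $l_1,\dots,l_k$ be the lines prolonging the sides and $D_r^{ij}$ the region of Lemma \ref{lem:sector} for the sector determined by $l_i,l_j$ (with the obvious convention for parallel lines), one has $D_r=\bigcap_{i\neq j}D_r^{ij}$. One inclusion holds because each sector contains $D$, so chords of $D$ are no longer than the corresponding chords of the sector; the other because any length-$\leq r$ chord of $D$ through $p$ has its endpoints on two sides, hence witnesses $r$-accessibility in that sector. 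This identity delivers in one stroke exactly what you identify as your main obstacle: $\bd D_r$ is automatically a finite concatenation of arcs of the finitely many bows $\bd D_r^{ij}$ and of segments in the side-lines, with no need to prove that the minimizing chord varies continuously, that endpoints avoid vertices except at isolated parameters, or that the local envelopes patch into the global boundary. Your vertex-pinning argument and your observation that $\overline{D_r}\cap l_i$ is a segment by convexity are correct, and your treatment of strict convexity (a bow contains no straight piece, and finitely many strictly convex arcs meeting at corners bound a strictly convex region) matches the paper's, which additionally notes that at a non-smooth point the two tangent chords are non-parallel so the bows cross transversely.

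The one place where your write-up is not yet a proof is the patching step you flag yourself: knowing that each smooth point of $\bd D_r\cap D$ lies \emph{on} some bow (which follows since $\bfr_{S^{ij}}(p)\leq r$ from the chord and $\bfr_{S^{ij}}(p)\geq\bfr(p)=r$ from $D\subset S^{ij}$) is weaker than knowing that $\bd D_r$ locally \emph{coincides} with that bow rather than merely touching it. Closing this requires an argument on both sides of $p$, and the cleanest way to get it is precisely the intersection formula above; I would recommend restructuring around it rather than trying to make the continuous-family argument rigorous.
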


\begin{proof}
 Let $l_1,\ldots, l_k$ be the lines determined by prolonging
 the sides of the polygon. Consider $l_i$, $l_j$. If they intersect,
 consider the sector that they determine in which $D$ is contained. Lemma \ref{lem:sector}
 provides us with a (convex) region $D_r^{ij}$. If $l_i,l_j$ are parallel and $r< d(l_i,l_j)$
 then set $D_r^{ij}=D$, and if $l_i,l_j$ are parallel and $r  \geq d(l_i,l_j)$
 then set $D_r^{ij}=\varnothing$. It is fairly clear that
  $$
   D_r= \bigcap_{i\neq j} D_r^{ij} \, .
  $$

 To see the last assertion, note that at any smooth point $p\in \bd D_r$, we have strict convexity
 because of the shape of the bows given in Lemma \ref{lem:sector}. If $p\in \bd D_r$ is a non-smooth
 point, then it is in the intersection of two such curves. This means that there are segments $\sigma_1,\sigma_2$
 of length $r$ where $\sigma_1$ has endpoints at lines $l_{i_1},l_{j_1}$ and
 $\sigma_2$ has endpoints at lines $l_{i_2},l_{j_2}$. Moreover, the endpoints should be actually in
 the sides of $D$ (otherwise $p$ would be $r'$-accessible for some $r'<r$). In particular, this means that
 $\sigma_1$, $\sigma_2$ cannot be parallel. As such segments are tangent to the bows, the curves
 intersect transversely at $p$, and $p$ is a corner point.

 A similar statement holds for $I_D=E_R$, when it is not a point, by doing the above reasoning for $r=R$.
\end{proof}

In particular, we see that $I_D$ cannot be a segment for polygons.

For instance, when $D$ is a rectangle of sides $a\geq b$, then $R=b$. We draw the bows at the vertices,
to draw the set $I_D=E_R$.

\begin{figure}[h] %{Flow-box}
%\centering
\resizebox{6cm}{!}{\includegraphics{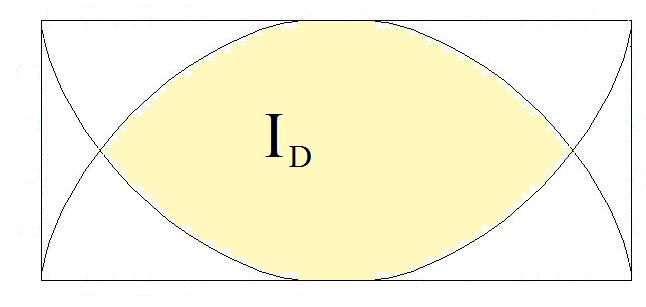}}    % name of the file - without extension
\caption{For a rectangle, $I_D$ has interior} \label{fig:rectangle}
\end{figure}

Note that $I_D$ intersects $\bd D$ if and only if $a\geq 2b$.

\bigskip

It would be nice to have a function
 $$
 I_D=(I_1,I_2)=I_D((x_1,y_1),(x_2,y_2),\ldots, (x_k,y_k)) \in \R^2\, ,
 $$
which assigns the value of $I_D$ given the vertices $(x_i,y_i)$ of a $k$-polygon. Such
function is only defined for polygons with non-parallel sides. %Moreover, it is not
%continuous, as we shall see in Remark \ref{rem:not-continuous}.

We shall produce the formula for $I_D$ for the case of an \emph{isosceles triangle}.
Consider an isosceles triangle of height $1$, and base $2\lambda>0$. Put the vertices at
the points $A=(0,0)$, $B=(2\lambda,0)$ and $C=(\lambda,1)$.
By symmetry, the point $I_D$ must lie in the vertical axis $x=\lambda$. Moreover, the
segment of length $R$ through $I_D$ tangent to the bow corresponding to $C$ must be
horizontal.
This means that $I_D=(\lambda, I_2)$ where $\frac{R}2=\lambda (1-I_2)$.
So
 $$
 I_D= (I_1,I_2)=(\lambda, I_2(\lambda))=\left( \lambda, 1- \frac{R}{2\lambda}\right) \, .
 $$
The sector corresponding to $A$ is that of Lemma \ref{lem:sector}, and
the point $I_D$ should lie in its $\lambda$-bow, which is
the curve given in Lemma \ref{lem:sector} for the value $r=R$.  Hence
\begin{eqnarray*}
  \lambda &=& R (\cos^3\theta + \lambda (\sin^3 \theta+ 2 \sin\theta\cos^2\theta)) , \\
  1- \frac{R}{2\lambda} &=& R (\sin^3\theta - \lambda\sin^2\theta\cos\theta)\, .
\end{eqnarray*}
Eliminating $R$, we get
 $$
 \lambda^2 \sin^2\theta \cos \theta + 2\lambda \sin\theta \cos^2\theta + \cos^3\theta -\frac12 =0
 $$
i.e.
 \begin{equation}\label{eqn:la}
 \lambda=\frac{-2\cos^2\theta + \sqrt{2\cos\theta}}{2\sin\theta \cos\theta}
 \end{equation}
(the sign should be plus, since $\lambda>0$). Note that for an equilateral triangle, $\lambda=\frac{1}{\sqrt{3}}$,
$I_2=\frac13$, $\theta=\frac\pi3$ and $R=\frac{4}{3\sqrt{3}}$.

Also
  \begin{equation}\label{eqn:R}
 R=\frac{\lambda}{ \cos^3\theta + \lambda (\sin^3 \theta+ 2 \sin\theta\cos^2\theta)} \, .
  \end{equation}
 One can check the following formula:
 \begin{equation}\label{eqn:I2}
  I_2= 1- \frac{R}{2\lambda}= \lambda \frac{\sin^3\theta - \lambda\sin^2\theta \cos \theta}{\cos^3\theta +
  \lambda (\sin^3\theta + 2 \sin \theta \cos^2\theta)} \, .
   \end{equation}
This locates the point $I_D=(\lambda(\theta), I_2(\lambda(\theta)))$.

\begin{rem}
 Do the change of variables $\cos\theta=\frac{1-u^2}{1+u^2}$, $\sin \theta=\frac{2u}{1+u^2}$, to get
 algebraic expressions for $I_D$. It is to be expected that this algebraicity property holds
 for a general triangle.
\end{rem}

Recall the position of the ortocentre, incentre, baricentre and circumcentre
 \begin{eqnarray*}
 H &=& \left(\lambda, \lambda^2 \right). \\
 I&=&\left(\lambda,  \frac{\lambda}{\lambda+\sqrt{\lambda^2+1}}\right). \\
 G&=&\left(\lambda, \frac13\right), \\
 O&=&\left(\lambda, \frac{1-\lambda^2}{2}\right).
 \end{eqnarray*}
We draw the height of the point $H,I,G,O,I_D$ as a function of $\lambda$ : %(the base=$2\lambda$):

\begin{figure}[h] %{Flow-box}
%\centering
\resizebox{12cm}{!}{\includegraphics{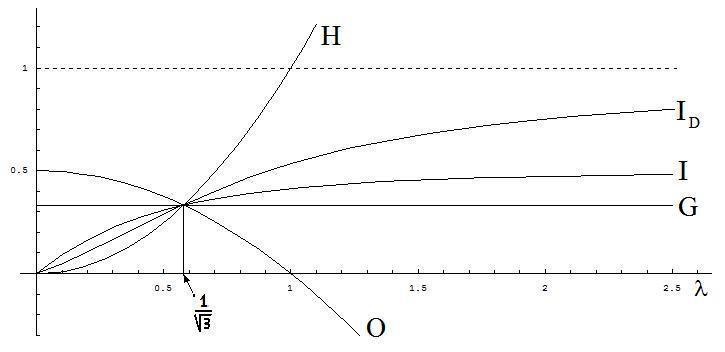}}    % name of the file - without extension
\caption{Notable points of a triangle of height $1$ and base $2\lambda$}
\end{figure}

A simple consequence is that these $5$ points are distinct for an isosceles triangle
which is not equilateral. We conjecture that this is true for a non-isosceles triangle.

Note the asymptotic for an isosceles triangle. For $\lambda \sim 0$, we have that (\ref{eqn:la})
implies $\cos^3\theta \sim \frac12$. Now (\ref{eqn:R}) and (\ref{eqn:I2}) give that
$R\sim 2\lambda$ and
  $$
 I_2(\lambda) \sim \frac{\sin^3\theta}{\cos^3\theta} \lambda  \sim ( 2^{2/3}-1)^{3/2} \lambda \, .
  $$
Rescale the triangle to have base $b=2$ and height $h=\frac{1}{\lambda}$. Then when $h$ is large,
the point $I_D$ approaches to be at distance $( 2^{2/3}-1)^{3/2}=0.4502$ to the base, and $R \sim 2$.
Also, for $\lambda \to \infty$, we have $I_2(\lambda)\to 1$.

\begin{rem} \label{rem:not-continuous}
 Consider a rectangle $D$ with vertices $( \pm a, \pm 1)$, with $a\gg 1$.
 Then $I_D$ has interior (see Figure \ref{fig:rectangle}). Moving slightly the vertices at the left,
we get an isosceles trapezoid $Z_\epsilon$, with vertices $( -a,\pm (1-\epsilon)), (a,\pm 1)$, for
$\epsilon>0$.
Consider the triangle $T_\epsilon$ obtained by prolonging the long sides of $Z_\epsilon$, i.e.
with vertices $(a-2a/\epsilon,0), (a,\pm 1)$. By the above, the point $I_{T_\epsilon}
\sim (a-0.4502, 0)$. As $R\sim 2$, we have that $I_{Z_\epsilon}=I_{T_\epsilon}$.

By symmetry, if we consider the isosceles trapezoid $Z_\epsilon'$ with vertices
$( -a,\pm 1 ), (a,\pm (1-\epsilon))$, then $I_{Z_\epsilon'} \sim (-a+0.4502,0)$.

The polygons $Z_\epsilon$ and $Z_\epsilon'$ are nearby, but their points of maximum inaccessibility are
quite far apart. So the map $D\mapsto I_D$ cannot be extended continuously (in any reasonable topology)
to all polygons with $4$ sides.
\end{rem}

%
%\begin{thebibliography}{2}
% \bibitem{Danao}
%  R.A. Danao, Some properties of explicitly quasiconcave functions,
%  \emph{J. Optimization Theory and Appl.} \textbf{74} (1992) 457--468.
%
%\end{thebibliography}

\end{document}